\numberwithin{equation}{section}
\theoremstyle{plain}
\newtheorem{theorem}[subsubsection]{Theorem}
 \newtheorem{lemma}[subsubsection]{Lemma}
 \newtheorem{prop}[subsubsection]{Proposition}
 \newtheorem{conj}[subsubsection]{Conjecture}
 \theoremstyle{definition}
\newcommand{\CC}{\mathbb{C}}
\newcommand{\QQ}{\mathbb{Q}}
\newcommand{\ZZ}{\mathbb{Z}}
\newcommand{\calF}{\mathcal{F}}
\newcommand{\calG}{\mathcal{G}}
\newcommand{\calO}{\mathcal{O}}
\newcommand{\frb}{\mathfrak{b}}
\newcommand{\frn}{\mathfrak{n}}
\newcommand{\ind}{\textup{ind}}
\newcommand{\Tr}{\textup{Tr}}
\newcommand\Hom{\textup{Hom}}
\newcommand\End{\textup{End}}
\newcommand\GL{\textup{GL}}
\newcommand\gl{\mathfrak{gl}}
\newcommand{\ad}{\textup{ad}}
\newcommand{\Ad}{\textup{Ad}}
\newcommand{\quash}[1]{}
\newcommand{\und}{\underline}
\newcommand{\Br}{\mathfrak{Br}}
\newcommand{\Wr}{\bar{W}}
\newcommand{\MF}{\mathbf{MF}^{\hat{T}}}
\newcommand{\calXr}{\bar{\mathcal{X}}}
\newcommand{\MFs}{\mathbf{MF}}
\newcommand{\calC}{\mathcal{C}}
\newcommand{\JM}{\mathrm{JM}}
\newcommand{\CE}{\mathrm{CE}}
\newcommand{\calCr}{\bar{\mathcal{C}}}
\newcommand{\fgt}{\mathrm{fgt}}
\DeclareMathOperator{\xIm}{Im}
\thanks{The work of A.O. was supported in part by  the NSF CAREER grant DMS-1352398}
\thanks{The work of L.R. was supported in part by the Sloan Foundation and the NSF grant DMS-1108727}
\title{A categorification of a cyclotomic Hecke algebra}
\author{A. Oblomkov}
\address{
A.~Oblomkov\\
Department of Mathematics and Statistics\\
University of Massachusetts at Amherst\\
Lederle Graduate Research Tower\\
710 N. Pleasant Street\\
Amherst, MA 01003 USA
}
\email{oblomkov@math.umass.edu}
\author{L. Rozansky}
\address{
L.~Rozansky\\
Department of Mathematics\\
University of North Carolina at Chapel Hill\\
CB \# 3250, Phillips Hall\\
Chapel Hill, NC 27599 USA
}
\email{rozansky@math.unc.edu}
\begin{document}
\maketitle
\begin{abstract}
  We propose a categorification of the cyclotomic Hecke algebra in terms
  of the equivariant K-theory of the framed matrix factorizations. The
  construction generalizes the earlier construction of the authors
  for a categorification of the finite Hecke algebra of type A.
  We also explain why our construction provides a faithful realization of the
  Hecke algebras and discuss a geometric realization of the Jucys-Murphy subalgebra.
\end{abstract}

\tableofcontents
\section{Introduction}
\label{sec:introduction}

In this paper we expand the approach of \cite{OblomkovRozansky16} and \cite{OblomkovRozansky17} in order to categorify the cyclotomic Hecke algebra with the help of a special category of matrix factorizations.

\def\IC{ \mathbb{C}}
\def\GL{ \mathrm{GL}}
\def\GLn{\GL_n}
\def\GLnC{ \GL(n,\IC)}
\def\gl{ \mathrm{gl}}
\def\mG{ G }
\def\mB{ B }
\def\mfg{ \mathfrak{g}}
\def\mfh{ \mathfrak{h}}
\def\mfb{ \mathfrak{b}}
\def\mfn{ \mathfrak{n}}
\def\mfB{ \mathfrak{B}}
\def\csX{ \mathcal{X}}
\def\xact{\cdot}
\def\rmT{ \mathrm{T}}
\def\rmTs{ \rmT^* }
\def\TsB{ \rmTs\mfB }

\def\Adv#1{ \mathrm{Ad}_{#1}}

\def\mBB{\mB\times\mB}

\def\xCat{ \mathbf{MF}}
\def\xCatfrr{ \xCat^{\xfrm}_{\xr}}
\def\xCatfrrp{ \xCat^{\xfrm}_{\xrp}}

\def\xCatn{ \xCat_n}
\def\xCatnfrr{ \xCat^{\xfrm}_{n,\xr}}
\def\xCatnfrrp{ \xCat^{\xfrm}_{n,\xrp}}

\def\xCatnx{ \xCatn }
\def\xCatny{ \xCatn }

\def\xfrm{ \mathrm{fr}}
\def\xfrmd{ \mathrm{f}}
\def\xfr{\mathrm{fr}}
\def\xfree{ \mathrm{free}}
\def\cycl{\mathrm{ct}}
\def\xfull{ \mathrm{full}}
\def\xsf{ \mathrm{sf}}
\def\xaff{\mathrm{aff}}
\def\xcr{ \mathrm{crit}}

\def\xMF{ \mathbf{MF}}
\def\xMFT{ \xMF^{\Tqt}}
\def\xMFTv#1{ \xMFT_{#1}}
\def\xMFTG{ \xMFTv{\mG}}
\def\xMFTB{ \xMFTv{\mB}}
\def\xMFTBB{ \xMFTv{\mBB}}

\def\xMFTa{ \xMF^{\Tqta}}
\def\xMFTav#1{ \xMFTa_{#1}}
\def\xMFTaG{ \xMFTav{\mG}}
\def\xMFTaB{ \xMFTav{\mB}}
\def\xMFTaBB{ \xMFTav{\mBB}}

\def\Tqt{ T_{\mathrm{qt}} }
\def\Tba{ T_{\xba}}
\def\Tqta{ \hat{T}}

\def\Cs{ \IC^* }
\def\Csq{ \Cs_q }
\def\sCsq{ \Csq }
\def\Cst{ \Cs_t }
\def\Csa{ \Cs_{\xba}}

\def\Knrr{Kn\"{o}rrer}
\def\Knrrp{\Knrr\ periodicity}

\def\yW{ \bar{W} }
\def\xW {\bar{ W} }
\def\xWl{ \xW^{\mathrm{lin}}}
\def\cXt{\bar{ \mathcal{X}}}
\def\cXtt{ \cXt_2 }
\def\cXth{ \cXt_3 }
\def\cXtd{ \mfb\times\mG\times\mfn }
\def\cXtfl{ \cXt_{\xfull}}
\def\cXtsf{ \cXt^{\xfrm}_r}

\def\cXttfl{ \cXt_{2,\xfull}}
\def\cXttsf{ \cXt^{\xfrm}_{2,r}}
\def\cXttsfB{ \cXttsf(B)}
\def\cXttscr{ \cXt^{\xcr}_{2,r}}
\def\cXttsfcr{ \cXt^{\xfrm,\xcr}_{2,r}}
\def\cXtts{ \cXt_{2,r}}

\def\xIcr{ I^{\xcr}}

\def\mfBr{ \mathfrak{Br}}
\def\mfBrv#1{ \mfBr_{#1}}
\def\mfBrn{ \mfBrv{n}}

\def\mfBraff{\mfBr^{\xaff}}
\def\mfBraffv#1{ \mfBraff_{#1}}
\def\mfBraffn{ \mfBraffv{n}}

\def\hPhi{ \Phi }
\def\hphi{ \phi }
\def\xPaff{ \hPhi^{\xaff}}
\def\xPct{ \hPhi^{\cycl}}
\def\xpaff{ \hphi^{\xaff}}

\def\xr{ r }
\def\xrp{ \xr' }

\def\mfH{ \mathfrak{H}}
\def\mfHv#1{ \mfH_{#1}}
\def\mfHn{ \mfHv{n}}

\def\mfHaff{ \mfH^{\xaff}}
\def\mfHaffv#1{ \mfHaff_{#1}}
\def\mfHaffn{ \mfHaffv{n}}

\def\mfHcl{ \mfH^{\cycl}}
\def\mfHr{ \mfHcl_{\xr}}
\def\mfHrp{ \mfHcl_{\xrp}}
\def\mfHo{ \mfHcl_{1}}
\def\mfHnr{ \mfHcl_{n,\xr}}
\def\mfHnrp{ \mfHcl_{n,\xrp}}
\def\mfHno{ \mfHcl_{n,1}}

\def\mfJM{ \mathfrak{J}}
\def\mfJM{\JM}
\def\mfJMcl{ \mfJM^{\cycl}}
\def\mfJMr{ \mfJMcl_{\xr}}
\def\mfJMnr{ \mfJMcl_{n,\xr}}

\def\xJM{Jucys-Murphy}
\def\xBL{Bernstein-Lusztig}
\def\yJM{\mathrm{JM}}
\def\yJMv#1{ \yJM_{#1}}
\def\yJMn{ \yJMv{n}}

\def\xKth{ \mathrm{K}}
\def\xKthv#1{ \xKth_{#1}}
\def\xKthq{ \xKthv{\Csq\times\Csa}}
\def\xKthqa{ \xKthv{\Csq\times\Csa}}

\def\xqtm{ \kappa }

\def\xcV{ \mathcal{V} }
\def\xcVv#1{\xcV_{#1}}
\def\xcVr{ \xcVv{\xr}}

\def\xv{ v }

\def\xstab{ \mathrm{stab}}

\def\ffr{\fgt^*}  
\def\ffrrrp{ (\fgt_{\xr}^{\xrp})^*}

\def\xba{ \mathbf{a}}
\def\xbrma{ \mathrm{\xba}}
\def\xbap {\xba' }

\def\dIC{ \IC^*_{\Delta}}
\def\cIC{ \IC^*_{\mathrm{c}}}

\def\sg{ \sigma }

\def\xa{ a }
\def\bxa{ \mathbf{\xa}}
\def\qpmo{ q^{\pm 1}}

\def\Dlt{\Delta}
\def\Dlto{\Dlt_n}

\def\finj{ \psi_{\mathrm{inj}}}
\def\fsur{\mathfrak{fgt}}
\def\xphifr{\mathrm{fgt}^*}
\def\fchifr{\mathfrak{fgt}}
\def\fchifrv#1{ \fchifr^{#1}}
\def\fchifrr{ \fchifrv{\xr}}
\def\fchifrvv#1#2{ \fchifr_{#1}^{#2}}
\def\fchifrrrp{ \fchifrvv{\xr}{\xrp}}
\def\fct{ \kappa^{\cycl}}

\def\xphict{ \phi^{\cycl}}

\def\Zqoa{ \ZZ[\qpmo,\bxa] }
\def\Qqoa{ \QQ[\qpmo,\bxa]}

\def\FHilb{ \mathrm{FHilb}}
\def\FHilbv#1{ \FHilb_{#1}}
\def\FHilbn{ \FHilbv{n}}
\def\FHilbno{ \FHilbv{n-1}}
\def\FHilbnC{ \bigl(\FHilbn \bigr)^{\Csq}}

\def\FHilbfr{ \FHilb^{\xfree}}
\def\FHilbfrv#1{ \FHilbfr_{#1}}
\def\FHilbfrn{ \FHilbfrv{n}}
\def\FHilbfrno{ \FHilbfrv{n-1}}

\def\FM{ \mathrm{FM}}
\def\FMvv#1#2{ \FM_{#1,#2}}
\def\FMrn{ \FMvv{r}{n}}
\def\FMon{ \FMvv{1}{n}}
\def\FMrnT{ \FMrn^{\Tqta}}

\def\tFM{ \widetilde{\FM}}
\def\tFMvv#1#2{ \tFM_{#1,#2}}
\def\tFMrn{ \tFMvv{r}{n}}

\def\FMfr{ \FM^{\xfree}}
\def\FMfrvv#1#2{ \FMfr_{#1,#2}}
\def\FMfrrn{ \FMfrvv{r}{n}}
\def\FMfron{ \FMfrvv{1}{n}}

\def\tFMfr{ \tFM^{\xfree} }
\def\tFMfrvv#1#2{ \tFMfr_{#1,#2}}
\def\tFMfrrn{ \tFMfrvv{r}{n}}
\def\tFRfron{ \tFMfrvv{1}{n}}

\def\cX{ \mathcal{X}}
\def\bcX{ \bar{\cX}}
\def\bcXfr{ \bcX^{\xfrm}}
\def\bcXfrv#1{ \bcXfr_{#1}}
\def\bcXfrr{ \bcXfrv{r}}
\def\bcXfrrn{ \bcXfrv{r,n}}

\def\bcXfrt{ \bcXfrv{2}}
\def\bcXfrtGv#1{ \bcXfrt(G_{#1}) }
\def\bcXfrtPv#1{ \bcXfrt(P_{#1}) }
\def\bcXfrtPk{ \bcXfrt(P_{k}) }

\def\tpX{ \calXr_3}
\def\tpXr{\calXr_{3,r}}
\def\tpXrfr{\calXr_{3,r}^{\xfr}}

\def\bcXGn{ \bcX_2(G_n) }

\def\bcXtfrv#1{ \bcX_{2,#1}^{\mathrm{fr}}}
\def\bcXtfrr{ \bcXtfrv{r}}

\def\SBim{\mathbf{SBim}}
\def\SBimv#1{ \SBim_{#1}}
\def\SBimn{ \SBimv{n}}

\def\hT{ \hat{T}}
\def\Db{ D^{\mathrm{b}}}
\def\DhT{ D^{\hT}}
\def\DbhT{ \Db_{\hT}}

\def\DTFMrn{\DhT(\FMrn)}
\def\DTsFMrn{\DhT(\FMrn)}
\def\DTFMfrn{\DhT(\FMfrrn)}
\def\DTsFMfrn{\DhT(\FMfrrn)}

\def\mZ{ Z }
\def\xik{ k }

\def\uD{ \underline{D}}
\def\uDhT{ \uD_{\hT}}
\def\xDTFM{ \uDhT(\FMrn)}
\def\xDTFM{ \DhT(\FMrn)}

\def\xDTFMfr{ \DhT(\FMfrrn)}

\def\mnsp{ \hspace{-0.2em}}
\def\dqt{ /\mnsp/ }
\def\dqtch{ \dqt_{\chi}}

\def\xeven{ \mathrm{even}}
\def\xodd{ \mathrm{odd}}

\def\xHm{ \mathcal{H}}
\def\xHmev{ \xHm^{\xeven}}
\def\xHmod{ \xHm^{\xodd}}
\def\xHmevod{ \xHm^{\xeven/\xodd}}

\def\xdev{ d_{\xeven}}
\def\xdodd{ d_{\xodd}}

\def\xMFBlv#1{\MF_B\bigl( #1 \bigr)}
\def\xMFBtlv#1{ \MF_{B^2}\bigl( #1 \bigr)}
\def\xMFBlzv#1{\MF_B\bigl( #1 ,0\bigr)}
\def\xMFBtlzv#1{ \MF_{B^2}\bigl( #1,0 \bigr)}

\def\DuhT{ D^{\hT}}
\def\yMFBlzv#1{\DuhT_B\bigl( #1 \bigr)}
\def\yMFBtlzv#1{ \DuhT_{B^2}\bigl( #1 \bigr)}

\def\xNS{ \mathrm{NS}}
\def\xNSv#1{ \xNS_{#1}}
\def\xNSn{ \xNSv{n}}

\def\yT{ \tau }

\def\Gn{ G_n }

\def\pibul{ \pi^{\bullet}}

\def\syt{ \mathrm{SYT}}
\def\sytv#1{ \syt_{#1}}
\def\sytn{ \sytv{n}}
\def\sytno{ \sytv{n-1}}

\def\lm{\lambda}
\def\lmp{ \lm' }

\def\yS{\sigma}

\def\xout{ \mathrm{out}}
\def\Cout{ C_{\xout}}

\def\KCHn{ K_{\CC^*}\bigl(D^{\Csq}(\FHilbn)\bigr) }

\def\dgD{ D_{\mathrm{dg}}}

\def\Dtot{ D_{\mathrm{tot}}}
\def\dce{ d_{\mathrm{CE}}}

\subsection{Hecke algebras}
Fix a positive integer $n$. The affine braid group on $n$ strands $\mfBraffn$ is generated by elements $\sg_1,\ldots,\sg_{n-1}$ and $\Dlto$ with a pictorial presentation
\begin{equation*}\label{AffineBraidGenerators}
\sigma_i =
\beginpicture
\setcoordinatesystem units <.5cm,.5cm>         
\setplotarea x from -5 to 3.5, y from -2 to 2    
\put{${}^{i+1}$} at 0 1.2      %
\put{${}^{i}$} at 1 1.2      %
\put{${}^{n}$} at -3 1.2
\put{${}^{1}$} at 3 1.2
\put{$\bullet$} at -3 .75      %
\put{$\bullet$} at -2 .75      %
\put{$\bullet$} at -1 .75      %
\put{$\bullet$} at  0 .75      
\put{$\bullet$} at  1 .75      %
\put{$\bullet$} at  2 .75      %
\put{$\bullet$} at  3 .75      %
\put{$\bullet$} at -3 -.75          %
\put{$\bullet$} at -2 -.75          %
\put{$\bullet$} at -1 -.75          %
\put{$\bullet$} at  0 -.75          
\put{$\bullet$} at  1 -.75          %
\put{$\bullet$} at  2 -.75          %
\put{$\bullet$} at  3 -.75          %
\plot -4.5 1.25 -4.5 -1.25 /
\plot -4.25 1.25 -4.25 -1.25 /
\ellipticalarc axes ratio 1:1 360 degrees from -4.5 1.25 center
at -4.375 1.25
\put{$*$} at -4.375 1.25
\ellipticalarc axes ratio 1:1 180 degrees from -4.5 -1.25 center
at -4.375 -1.25
\plot -3 .75  -3 -.75 /
\plot -2 .75  -2 -.75 /
\plot -1 .75  -1 -.75 /
\plot  2 .75   2 -.75 /
\plot  3 .75   3 -.75 /
\setquadratic
\plot  0 -.75  .05 -.45  .4 -0.1 /
\plot  .6 0.1  .95 0.45  1 .75 /
\plot 0 .75  .05 .45  .5 0  .95 -0.45  1 -.75 /
\endpicture
,
\qquad
\Delta_n =
~~\beginpicture
\setcoordinatesystem units <.5cm,.5cm>         
\setplotarea x from -5 to 3.5, y from -2 to 2    
\put{${}^{n}$} at -3 1.2
\put{${}^{1}$} at 3 1.2
\put{$\bullet$} at -3 0.75      %
\put{$\bullet$} at -2 0.75      %
\put{$\bullet$} at -1 0.75      %
\put{$\bullet$} at  0 0.75      
\put{$\bullet$} at  1 0.75      %
\put{$\bullet$} at  2 0.75      %
\put{$\bullet$} at  3 0.75      %
\put{$\bullet$} at -3 -0.75          %
\put{$\bullet$} at -2 -0.75          %
\put{$\bullet$} at -1 -0.75          %
\put{$\bullet$} at  0 -0.75          
\put{$\bullet$} at  1 -0.75          %
\put{$\bullet$} at  2 -0.75          %
\put{$\bullet$} at  3 -0.75          %
\plot -4.5 1.25 -4.5 -0.13 /
\plot -4.5 -0.37   -4.5 -1.25 /
\plot -4.25 1.25 -4.25  -0.13 /
\plot -4.25 -0.37 -4.25 -1.25 /
\ellipticalarc axes ratio 1:1 360 degrees from -4.5 1.25 center
at -4.375 1.25
\put{$*$} at -4.375 1.25
\ellipticalarc axes ratio 1:1 180 degrees from -4.5 -1.25 center
at -4.375 -1.25
\plot -2 0.75  -2 -0.75 /
\plot -1 0.75  -1 -0.75 /
\plot  0 0.75   0 -0.75 /
\plot  1 0.75   1 -0.75 /
\plot  2 0.75   2 -0.75 /
\plot  3 0.75   3 -0.75 /
\setlinear
\plot -3.3 0.25  -4.1 0.25 /
\ellipticalarc axes ratio 2:1 180 degrees from -4.65 0.25  center
at -4.65 0
\plot -4.65 -0.25  -3.3 -0.25 /
\setquadratic
\plot  -3.3 0.25  -3.05 .45  -3 0.75 /
\plot  -3.3 -0.25  -3.05 -0.45  -3 -0.75 /
\endpicture.
\end{equation*}
(strands are indexed from the right to the left)
modulo the braid relations\begin{gather}
\label{eq:brrel1}
   \sigma_i\; \sigma_{i+1}\; \sigma_i=\sigma_{i+1}\; \sigma_i\; \sigma_{i+1},\quad i=1,\; n-2,\\
\label{eq:brrel2}
   \sigma_i\; \sigma_j=\sigma_j\; \sigma_i,\quad |i-j|>1.
\end{gather}
and the affine relations
\begin{gather}
\label{eq:afrel1}
   \sigma_{n-1}\; \Delta_n\; \sigma_{n-1}\; \Delta_n=  \Delta_n\; \sigma_{n-1}\; \Delta_n\;\sigma_{n-1},\\
\label{eq:afrel2}
   \sigma_i\; \Delta_n=\Delta_n\; \sigma_i,\quad i<n-1,
\end{gather}
The ordinary braid group $\mfBrn$ is the quotient of $\mfBraffn$ modulo the relation $\Dlto = 1$. The homomorphism is induced by forgetting the flag pole in the above
picture and we use notation \(\fsur\) for the quotient map.

The affine Hecke algebra $\mfHaffn$ is defined as  a group ring $\CC[\qpmo](\mfBraffn)$ modulo the Hecke relations
\begin{equation}
\label{eq:herel}
(\sigma_i - q)(\sigma_i + q^{-1}) = 0,\qquad i = 1,\ldots,n-1.
\end{equation}

Fix a positive integer $\xr$ and introduce $\xr$ variables $\bxa = (\xa_1,\ldots,\xa_{\xr})$. The cyclotomic Hecke algebra $\mfHnr$ is defined as the quotient of $\CC[\qpmo,\bxa](\mfBraffn)$ modulo the affine Hecke relations and the cyclotomic relation
\begin{equation}
\label{eq:cyrel}
(\Dlto-\xa_1)\cdots (\Dlto-\xa_{\xr} ) = 0.
\end{equation}


To summarize, we have the following diagram
\begin{equation}
\label{eq:cmtsq}
\begin{tikzcd}[column sep=2cm, row sep=2cm]
\mfBraffn
\ar[r,"\fsur"]
\ar[d,"\xqtm"]
\ar[dr,"\fct"]
&
\mfBrn
\ar[d,dashed,"\xqtm"]
\\
\mfHaffn
\ar[r,"\fchifr"]
&
\mfHnr
\end{tikzcd}
\end{equation}
where $\fct := \fchifr\circ\xqtm$ and the dashed arrow appears only in the special case of $\xr=1$ when $\mfHno = \mfHn$. Moreover, for $\xrp< \xr$ consider $\xbap\subset \xba$, $\xbap = (a_1,\ldots a_{\xrp})$. Then there is an obvious homomorphism $\fchifrrrp\colon\mfHnr\rightarrow\mfHnrp$ with the properties $\fchifrvv{\xrp}{\xr''}\circ\fchifrrrp=\fchifrvv{\xr}{\xr''}$ and $\fchifrrrp\circ\fchifrr = \fchifrv{\xrp}$.


\subsection{A categorification of the affine Hecke algebra}

We use the following standard notations: $\mG = \GLnC$ is a group of $n\times n$ non-degenerate matrices; $\mfg = \gl(n,\IC)$ is its Lie algebra presented by $n\times n$ matrices; $\mB\subset\mG$ is the Borel subgroup presented by upper-triangular non-degenerate matrices; $\mfb\subset \mfg$ is the Borel subalgebra presented by upper-triangular matrices; $\mfn\subset\mfb$ is the nilpotent radical presented by strictly upper-triangular matrices.

We present a cotangent bundle over a flag variety $\mfB = \mG/\mB$ as another $\mB$-quotient:
$\TsB = (\mG\times\mfn)/B$, where $B$ acts as follows: $b\xact(g,Y) = (gb^{-1},\Adv{b}Y)$. The moment map of the $G$ action $\mu\colon\TsB\rightarrow\mfg$ takes the form $\mu(g,Y) = \Adv{g} Y$. We define an associated function $\yW\colon\mfg\times\TsB\rightarrow\IC$, $\yW(X,g,Y) = \Tr( X\Adv{g}Y)$.

All varieties appearing in our construction have an action of the `$(q,t)$-torus' $\Tqt = \Csq\times\Cst$. $\Tqt$ acts on $\mfg$ (and $\mfb$) and on the fibers of cotangent bundles $\rmTs\mfB$: $(a,b)\xact X = a^2 X$, $(a,b)\xact Y = a^{-2} b^2 Y$.


Our main category $\xCatn$ has several equivalent presentations as a category of equivariant matrix factorizations. In \cite{OblomkovRozansky16} we strive to define
the category of matrix factorizations:
\begin{equation}
\label{eq:cator}
\xCatn = \xMFTG\bigl(\mfg\times\TsB\times\TsB;\yW_2 - \yW_1\bigr),
\end{equation}
where $\yW_1$ and $\yW_2$ are functions associated with the first and of the second factors of $\rmTs\mfB$.

The $\mG$-equivariance is reduced to $\mB$-equivariance by rotating the first flag to the standard position:
$
\xCatn \cong \xMFTB\bigl(\mfg\times\mfn\times\TsB;\yW_2 -\Tr (X Y_1)\bigr).
$
The matrix $Y_1$ is bilinearly coupled with the strictly lower-diagonal part of the matrix $X$  in the term $\Tr(XY_1)$, hence the \Knrrp\ allows us to remove the factor of $\mfn$ while reducing $\mfg$ to $\mfb$:
\[
  \xCatn \cong \xMFTB\bigl(\mfb\times\TsB;\xW).
\]
Finally, invoking the quotient presentation of $\TsB$ we come to a category of $\mB\times\mB$-equivariant matrix factorizations over an affine variety $\cXtt$: 
\begin{equation}
\label{eq:affcat}
\xCatn \cong \xMFTBB\bigl(\cXtt;\xW\bigr)\footnote{The only category that is rigorously defined in our earlier papers is the  category  \(\mathbf{MF}_{B\times B}^{T_{qt}}(\calXr_2,\Wr)\), discussion here is a geometric motivation for the definition
of the latter category in \cite{OblomkovRozansky16}.},\qquad \cXtt=\cXtd.
\end{equation}

The index 2 in $\cXtt$ reminds of the presence of two factors of $\TsB$ in the original definition~\eqref{eq:cator}. These factors allow us  to define the monoidal structure of the category $\xCatn$ through a `bimodule-like' convolution, and we define in~\cite{OblomkovRozansky16} a homomorphism from the affine braid group on $n$ strands:
\[
\xPaff\colon\mfBraffn\rightarrow\xCatn.
\]
Moreover, we expect that  $\xCatn$ categorifies the $n$-strand affine Hecke algebra $\mfHaffn$ and the $\Csq$-equivariant K-theory of $\xCatn$ is isomorphic to $\mfHaffn$.
It is shown in \cite[Theorem 14.2]{OblomkovRozansky16} that the homomorphism $\xPaff$  is a part of a  commutative diagram
\[
\begin{tikzcd}
\mfBraffn \ar[r,"\xPaff"]
\ar[d,"\xqtm"]
&\xCatn
\ar[d,"\xKth"]
\\
\mfHaffn \ar[r,"\xpaff"]
&
\xKthq(\xCatn)
\end{tikzcd}
\]
Here $\xqtm$ is the quotient homomorphism which imposes quadratic Hecke relations, while $\xKth$ is the $K$-theory functor. 
In this paper we show that the homomorphism \(\xpaff\) is injective.

\subsection{Framing}
A transition from the affine Hecke algebra $\mfHaffn$ to the ordinary Hecke algebra $\mfHn$ was achieved in~\cite{OblomkovRozansky16,OblomkovRozansky17} through the introduction of a one-dimensional framing. In order to reach the cyclotomic Hecke algebra $\mfHnr$, we consider an $\xr$-dimensional framing. Namely, we consider an $n\times\xr$-dimensional vector space of linear maps
\[\xcVr = \Hom(\IC^{\xr},\IC^n).\]
The group $\mG$ acts on $\xcVr$ through its natural action on $\IC^n$. In addition, we consider the `$\xba$-torus' $\Tba = \IC^*_{a_1}\times\cdots\times\IC^*_{a_\xr}$ acting on $\xcVr$ through its natural action on $\IC^\xr$. Denote the combined torus $\Tqta = \Tqt\times\Tba$.


A triple $(X,Y,\xv)\in \mfg\times\mfg\times\xcVr$ is called stable, if
\begin{equation}
\label{eq:stab}
\IC\langle X,Y\rangle \,\xv(\CC^r) =\CC^n.
\end{equation}
Define the stable subvariety
$(\mfg\times\TsB\times\TsB\times\xcVr)_{\xstab}$ by the condition that both triples $(X,\mu_1,\xv)$ and $(X,\mu_2,\xv)$ are stable, and consider a `framed' version of the category $\xCatn$:
\[
  \xMFTaG\bigl((\mfg\times\TsB\times\TsB\times\xcVr)_{\xstab};\xW_2 - \xW_1\bigr).
\]

The rigorously defined category realizing such category is  an equivariant category of matrix factorizations over the stable framed version of $\cXtt$:
\begin{gather}
\label{eq:xtrfr}
\cXttsf = \bigl\{(X,g,Y,\xv)\in\cXtt\times\xcVr\,|\,\text{$(X,\Adv{g}Y,\xv)$ is stable}\bigr\},
\\
\label{eq:frcat}
\xCatnfrr:= \xMFTaBB\bigl(\cXttsf;\xW\bigr).
\end{gather}

The framing forgetting map  \(\fgt\colon\cXttsf\rightarrow \cXtt\) induces a pull-back functor
$\ffr\colon\xCatn\rightarrow\xCatnfrr$.
\begin{theorem}
The framing forgetting functor $\ffr$ commutes with the convolution of matrix factorizations.
\end{theorem}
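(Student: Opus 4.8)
The plan is to exhibit the convolution through its defining correspondence and then slot framing forgetting into a base-change square. Recall from \cite{OblomkovRozansky16} that the monoidal structure on $\xCatn$ is built from a three-flag space $\cXth$ carrying three copies of the Borel datum and a common matrix $X$, together with the three projections $p_{12},p_{23},p_{13}\colon\cXth\rightarrow\cXtt$ that remember two of the three flags; writing $\yW_{ij}=\yW_j-\yW_i$ for the associated potentials, one has
\[
M\star N=(p_{13})_*\bigl(p_{12}^*M\otimes p_{23}^*N\bigr),
\]
the pushforward integrating out the middle flag. Since $\ffr=\fgt^*$ is a pull-back, the theorem asks for a natural isomorphism $\fgt^*(M\star N)\cong\fgt^*M\star\fgt^*N$, the right-hand convolution being the one on $\xCatnfrr$.

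First I would introduce the framed three-flag space $\cXth^{\xfrm}$, obtained from $\cXth$ by adjoining the single vector $\xv\in\xcVr$ shared by all three flags and imposing the appropriate stability, together with projections $p_{ij}^{\xfrm}$ lying over the $p_{ij}$. Because $\xv$ is a global datum untouched by the flag-permuting maps and the potentials $\yW_{ij}$ do not involve $\xv$, the square
\[
\begin{tikzcd}[column sep=1.6cm]
\cXth^{\xfrm}\ar[r]\ar[d,"p_{ij}^{\xfrm}"']&\cXth\ar[d,"p_{ij}"]\\
\cXttsf\ar[r,"\fgt"]&\cXtt
\end{tikzcd}
\]
commutes for each $ij$, the horizontal arrows being framing forgetting. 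The theorem then splits into three compatibilities: $\fgt^*p_{ij}^*=(p_{ij}^{\xfrm})^*\fgt^*$, which is functoriality of pull-back; $\fgt^*(A\otimes B)=\fgt^*A\otimes\fgt^*B$, which is the monoidality of pull-back for the tensor of matrix factorizations; and the single nonformal ingredient $\fgt^*(p_{13})_*\cong(p_{13}^{\xfrm})_*\fgt^*$, a base-change isomorphism along the square for $p_{13}$.

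To access this base change I would factor $\fgt$ as the flat projection $\pi\colon\cXtt\times\xcVr\rightarrow\cXtt$ adjoining the spectator factor $\xcVr$, followed by the open immersion $j\colon\cXttsf\hookrightarrow\cXtt\times\xcVr$ cutting out the stable locus. For $\pi$ the statement is immediate: the framing of $\cXth$ before imposing stability is $\cXth\times\xcVr$, the middle-flag projection is $p_{13}\times\id$, and flat base change along $\pi$ supplies $\pi^*(p_{13})_*\cong(p_{13}\times\id)_*\pi^*$, the potential and the vector $\xv$ both being spectators. It remains to commute $j^*$, i.e.\ restriction to the stable locus, past the middle-flag pushforward.

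The main obstacle lies exactly here, and it is geometric rather than formal. After the outer legs are required to be stable the middle-flag projection $p_{13}^{\xfrm}$ is no longer proper: its fibre over a point with $(X,\mu_1,\xv)$ and $(X,\mu_3,\xv)$ stable is only the open subvariety of the middle flag variety on which $(X,\mu_2,\xv)$ is again stable, and taking $Y_2=0$ shows this is a proper open subset in general. Consequently the needed commutation is not classical proper base change; I expect it to rest on the fact that the convolution pushforward is proper on the singular support of $p_{12}^*M\otimes p_{23}^*N$, so that the integration over the middle flag is effectively carried out over a proper locus even though the ambient map is not. The crux is then to show that this support-properness survives the open stability restriction, equivalently that cutting to the stable outer locus removes no part of the relevant support across the non-proper boundary. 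Granting this --- which should follow from the compatibility of the stability condition with the nested Borel data defining $\cXth$, so that the boundary where $(X,\mu_2,\xv)$ degenerates contributes nothing to the pushforward --- the open base change for $j$ holds, and composing it with the flat base change for $\pi$ yields the desired isomorphism.
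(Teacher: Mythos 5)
Your reduction --- strip off the formal pieces (functoriality of pull-back, monoidality of pull-back for the tensor product, flat base change along the projection adjoining the spectator factor $\xcVr$) and isolate the base change for the middle-flag push-forward past the open stable locus as the sole non-formal ingredient --- is exactly the skeleton of the paper's argument: Lemma~\ref{lem:homo} invokes \cite[lemma 12.3]{OblomkovRozansky16} to reduce the compatibility of $\fgt^*$ with convolution to precisely this point, and your observation that $\bar{\pi}_{13}$ restricted to the outer-stable locus fails to be proper (your $Y_2=0$ example) correctly identifies why the step cannot be classical proper base change.

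At the crux, however, you stop short, and the mechanism you gesture at would not close the gap. ``Compatibility of the stability condition with the nested Borel data'' cannot by itself show that the boundary contributes nothing: as your own example demonstrates, middle stability genuinely fails at some points of the outer-stable locus, so no statement about the flag data alone makes that boundary empty. What actually saves the argument --- and what the paper uses --- is that a matrix factorization is, up to homotopy, supported on the critical locus of its potential, so the base change need only hold over the ideal $\xIcr$ of partial derivatives of $\Wr$; and \emph{on the critical locus} outer stability does imply middle stability. Concretely, the paper establishes the set-theoretic identity
\[
\bar{\pi}_{13}^{-1}(\cXttsf)\cap\bar{\pi}_{12}^{-1}(\cXttsfcr)\cap\bar{\pi}_{32}^{-1}(\cXttsfcr)=
\bar{\pi}_{13}^{-1}(\cXttsf)\cap\bar{\pi}_{12}^{-1}(\cXttscr)\cap\bar{\pi}_{32}^{-1}(\cXttscr),
\]
by the argument of \cite[lemma 12.2]{OblomkovRozansky16}: the vanishing of the derivatives of $\Wr$ forces commutation-type relations between $X$ and the intermediate moment-map values, whence $\IC\langle X,\mu_2\rangle\,\xv(\CC^{\xr})$ agrees with the span produced by the stable outer pair, and the boundary where $(X,\mu_2,\xv)$ degenerates misses the support entirely. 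Your ``support-properness'' intuition points at exactly this, but your write-up never specifies the support as the critical locus of $\Wr$ nor brings in the critical-locus equations, which are the only reason the claim is true; without them the step you ``grant'' is simply the theorem restated. Filling in this computation (for general $\xr$, by rerunning the $\xr=1$ argument of \cite{OblomkovRozansky16}) is what completes the proof.
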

This theorem allows us to define a homomorphism $\xPct\colon\mfBraffn\rightarrow \xCatnfrr$, which is a composition of $\xPaff$ and $\ffr$:
\[
\begin{tikzcd}
\mfBraffn
\ar[r,"\xPaff"']
\ar[rr, bend left=20,"\xPct"]
&
\xCatn
\ar[r,"\ffr"']
&
\xCatnfrr
\end{tikzcd}
\]
\begin{theorem}\label{thm:Ktheo}
There exists a homomorphism
\begin{equation}
\label{eq:kgrhom}
\xphict\colon \mfHnr\longrightarrow \xKthqa(\xCatnfrr)
\end{equation}
such that the following diagram is commutative:
\begin{equation}
\label{eq:simplesq}
\begin{tikzcd}[column sep=1.5cm,row sep=1.5cm]
\mfBraffn
\ar[r,"\xPct"]
\ar[d,"\fct"]
&
\xCatnfrr
\ar[d,"\xKth"]
\\
\mfHnr
\ar[r,"\xphict"]
&
\xKthqa(\xCatnfrr)
\end{tikzcd}
\end{equation}
If \(r=1\) then homomorphism \(\xphict\) is injective.
\end{theorem}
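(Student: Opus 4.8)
The plan is to deduce the injectivity of $\xphict$ from the injectivity of $\xpaff$ established in this paper, by pinning down exactly which relations the framing imposes on the image of the affine Hecke algebra.

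First I would record an identity tying the two rows of~\eqref{eq:simplesq} together. The framing forgetting functor $\ffr$ induces a homomorphism on equivariant $K$-theory, written $\xKth(\ffr)\colon\xKthq(\xCatn)\to\xKthqa(\xCatnfrr)$, and by construction $\xPct=\ffr\circ\xPaff$. Combining the commutativity of~\eqref{eq:simplesq} with the unframed square $\xKth\circ\xPaff=\xpaff\circ\xqtm$ and the factorisation $\fct=\fchifr\circ\xqtm$ gives
\[
\xphict\circ\fchifr\circ\xqtm\;=\;\xKth\circ\xPct\;=\;\xKth(\ffr)\circ\xpaff\circ\xqtm .
\]
Since $\xqtm\colon\mfBraffn\to\mfHaffn$ is surjective it cancels on the right, leaving
\[
\xphict\circ\fchifr\;=\;\xKth(\ffr)\circ\xpaff\colon\mfHaffn\longrightarrow\xKthqa(\xCatnfrr).
\]
For $\xr=1$ the map $\fchifr\colon\mfHaffn\to\mfHno$ is the surjection imposing $\Dlto=\xa_1$, with kernel the two-sided ideal generated by $\Dlto-\xa_1$. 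As $\fchifr$ is onto and $\xpaff$ is injective, a short diagram chase shows that $\xphict$ is injective if and only if
\[
\ker\bigl(\xKth(\ffr)\bigr)\cap\Image(\xpaff)\;=\;\xpaff(\ker\fchifr),
\]
so everything reduces to measuring how far the framing collapses $\Image(\xpaff)$.

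The inclusion $\supseteq$ is automatic from the very existence of $\xphict$: it only restates that the cyclotomic relation dies after framing, reflecting geometrically that for $\xr=1$ the framing is a single vector $\xv$ spanning a line of $\Csa$-weight $\xa_1$ on which $\Dlto$ acts, so that $\xKth(\ffr)(\xpaff(\Dlto))=\xa_1$. The real content is the opposite inclusion, that framing imposes \emph{no further} relations, and I would prove it through the geometrically realised \xJM\ subalgebra. For $\xr=1$ the stable framed variety $\cXttsf$ is cut out of $\cXtt\times\xcVv{1}$ by the cyclicity condition $\IC\langle X,\Adv{g}Y\rangle\,\xv=\CC^n$, which is exactly the geometry used in~\cite{OblomkovRozansky16,OblomkovRozansky17} to categorify the finite Hecke algebra; its $\Tqta$-fixed points are indexed by the standard Young tableaux of size $n$, and on the localised $K$-theory the \xJM\ operators $\xphict(J_1),\dots,\xphict(J_n)$ act by the tautological fixed-point weights, that is, by the content sequences of the tableaux. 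Hence their joint spectrum is \emph{full}: the central character of every irreducible of $\mfHno\cong\mfHn$ occurs in $\xKthqa(\xCatnfrr)$.

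Granting this, the argument closes quickly. Were framing to impose any relation beyond the cyclotomic one, then over the fraction field of the ground ring some block of the split semisimple algebra $\mfHno$ would die, i.e.\ $\xphict(e_\lambda)=0$ for a central idempotent $e_\lambda$; but $e_\lambda$ is a polynomial in the $J_i$ that is nonzero on the $\lambda$-content eigenspace, which by fullness is present in $\xKthqa(\xCatnfrr)$, forcing $\xphict(e_\lambda)\neq0$. Thus no block lies in $\ker\xphict$, so $\xphict$ is injective after tensoring with the fraction field, and since $\mfHno$ is free over $\CC[\qpmo,\xba]$ this descends to the injectivity of $\xphict$ itself. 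The main obstacle is precisely the fixed-point computation underlying fullness: one must identify the $\Tqta$-fixed locus of the $\xr=1$ framed variety with standard Young tableaux and match the \xJM\ action with multiplication by the tautological Chern roots. Everything else is a formal consequence of the two commuting squares and of the injectivity of $\xpaff$.
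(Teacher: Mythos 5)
Your endgame coincides with the paper's: injectivity is proved on the \xJM\ subalgebra and then propagated to all of $\mfHno$ using the semisimple decomposition \eqref{eq:dec}, with descent from the fraction field by freeness; that part of your chase is sound. The genuine gap is that the step you yourself call ``the main obstacle'' is the entire content of the paper's argument, and the shortcut you propose for it would fail as stated. You want the \xJM\ operators to act on ``the localised K-theory'' of $\xCatnfrr$ by content weights of standard Young tableaux, so that every central character occurs. But $\xKthqa(\xCatnfrr)$ is the K-group of a category of equivariant matrix factorizations, and the relevant sheaf-theoretic category $\DhT(\FMon)$ is defined by a homotopy-support condition on two-periodic complexes; no off-the-shelf localisation theorem applies to either, so fullness of the joint spectrum cannot be read off from the fixed-point combinatorics of $\cXttsf$ --- one must show that the would-be eigenclasses are actually nonzero in the K-group. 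The paper does this by transporting the question through the functors $\tilde{\imath}_*,\tilde{\imath}^*$ of Section~\ref{sec:funct-tild-tild} and proving Theorem~\ref{thm:JMgeo} by induction on $n$: using the induction functor $\overline{\ind}_1$, the Koszul-corrected pullback $\pi^\bullet$ and the Markov-move-type identity \eqref{eq:pi_bul}, it establishes $\tilde{\imath}_*\circ\tilde{\imath}^*([\mathcal{O}_{\mathbb{A}_{\yT}}])=C_{\yT}\,\tilde{\imath}_*(\phi(Q_{\yT}))$ with $C_{\yT}\ne 0$ for the quasi-idempotents \eqref{eq:JMproj}; nonvanishing and linear independence of these classes are then obtained inside $\KCHn$ by pairing with the skyscrapers $\delta_{\yT}$ at the unique $\Csq$-fixed point of each \v{C}ech chart $U_{\yT}=\FHilbn\cap\mathbb{A}_{\yT}$. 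None of this inductive computation is supplied or replaced in your proposal.

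Two further problems. First, your reduction takes the injectivity of $\xpaff$ as an input ``established in this paper''; it is only announced in the introduction, and in substance it rests on the same fixed-point machinery as Theorem~\ref{thm:JMgeo}, so invoking it here risks circularity --- fortunately your final ``no block dies'' argument never actually uses the reduction through $\ker\bigl(\xKth(\ffr)\bigr)$, which you can simply delete. Second, the theorem also asserts the existence of $\xphict$ and the commutativity of \eqref{eq:simplesq} for every $r$, which you presuppose rather than prove: the paper obtains this by combining Lemma~\ref{lem:homo} (the pullback $\fgt^*$ is a homomorphism of convolution algebras, so the braid relations survive framing), the K-theoretic Hecke relation from \cite{OblomkovRozansky16}, and the cyclotomic relation \eqref{eq:cyclo_rel}, which comes from the contractibility of the Koszul complex $\mathrm{Kos}[\mathcal{L}_n\xrightarrow{\xv_n}\oplus_{i=1}^r\mathbf{a}_i\cdot\calO]$ on the stable locus; your single-weight remark about the framing line sketches only the $r=1$ instance of this relation.
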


The injectivity argument  in the last theorem could be easily expanded to the case of general \(r\), the details will appear elsewhere. However we expect stronger
statement to be true:

\begin{conj}
The  homomorphism $\xphict$ is an isomorphism.
\end{conj}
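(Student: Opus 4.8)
The plan is to prove that the already-constructed homomorphism $\xphict$ is bijective, treating injectivity and surjectivity separately; the commutative square of Theorem~\ref{thm:Ktheo} lets me take $\xphict$ for granted and argue purely about kernels and ranks. Injectivity is, by that theorem and the remark following it, essentially in hand: it holds for $\xr=1$, and the injectivity of $\xpaff$ proved earlier supplies the affine input needed to extend it. The genuinely new content is surjectivity, so that is where I would concentrate.

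For the injectivity extension I would exploit the extra torus $\Tba$. Its action refines $\xKthqa(\xCatnfrr)$ into $\Csa$-weight spaces indexed by monomials in $a_1,\dots,a_{\xr}$, and the cyclotomic relation~\eqref{eq:cyrel} is precisely the constraint recording how the flag-pole generator $\Dlto$ shifts these weights. Filtering $\mfHnr$ compatibly with this grading -- for instance by degree in $\Dlto$, whose admissible powers run over $0,\dots,\xr-1$ -- I would arrange that the associated graded map is controlled by the $\xr=1$ result together with the injectivity of $\xpaff$; injectivity of $\xphict$ then follows by the standard argument for a filtered map whose associated graded is injective.

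Surjectivity I would attack by comparing ranks. The cyclotomic Hecke algebra $\mfHnr$ is free of rank $\xr^{n}\,n!$ over $\CC[\qpmo,\bxa]$, with a basis indexed by the complex reflection group $G(\xr,1,n)$. On the geometric side I would compute $\xKthqa(\xCatnfrr)$ by localizing to the $\Tqta$-fixed loci of $\cXttsf$, refining the Bruhat-type stratification already used to analyze $\xCatn$. The expectation is that the framing together with the stability condition~\eqref{eq:stab} cuts out exactly $\xr^{n}$ choices over each of the $n!$ strata of the unframed model -- heuristically, a choice for each of the $n$ cyclically generated directions of which framing line in $\xv(\IC^{\xr})$ produces it, reproducing the $(\ZZ/\xr)^{n}$ factor of $G(\xr,1,n)$. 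Each piece should contribute a free rank-one $\CC[\qpmo,\bxa]$-module generated by the class of an explicit matrix factorization in the image of $\xPct$, giving total rank $\xr^{n} n!$ and hence surjectivity.

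The main obstacle, and where I expect the real difficulty, is controlling the K-theory of the framed matrix-factorization category itself. Three points are delicate: first, showing that imposing stability in the passage from $\cXtt$ to $\cXttsf$ is clean in K-theory, neither creating nor annihilating classes in a way that spoils the count; second, verifying that the generators produced by the stratification genuinely lie in the image of $\xphict$ rather than merely spanning abstractly; and third, promoting a rank equality over the fraction field $\QQ(q,\bxa)$ to an honest isomorphism of $\CC[\qpmo,\bxa]$-modules, which needs $\xKthqa(\xCatnfrr)$ to be free, or at least torsion-free, over the base. Granting these, injectivity together with the matching rank $\xr^{n} n!$ forces $\xphict$ to be an isomorphism.
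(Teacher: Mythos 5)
You should first be aware that the statement you set out to prove is stated in the paper as a conjecture and is not proved there: the paper establishes injectivity of $\xphict$ only for $r=1$ (Theorem~\ref{thm:Ktheo}), remarks that the injectivity argument extends to general $r$ with details deferred, and explicitly postpones surjectivity to future work, where it is expected to follow from a ``variation of the localization theory.'' So your proposal can only be judged as a research plan, and as a proof it has genuine gaps. The most concrete one is your closing inference: over the base ring $\CC[\qpmo,\bxa]$, an injective map between free modules of the same finite rank need not be surjective (multiplication by $2$ on $\ZZ$, or by $q-1$ here, is the standard counterexample), so ``injectivity plus matching rank $r^n\,n!$'' yields at most an isomorphism after base change to the fraction field $\QQ(q,\bxa)$. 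To get honest surjectivity you must show that classes generating $\xKthqa(\xCatnfrr)$ actually lie in the image of $\xphict$ --- which is precisely the second of the three delicate points you defer, so the plan is circular at exactly the step that carries the new content.

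The other unproved input is the localization computation itself. Thomason-type concentration theorems apply to equivariant coherent sheaves on schemes, but $\xKthqa(\xCatnfrr)$ is the K-theory of a category of equivariant matrix factorizations on a non-proper space with potential; no fixed-point or stratification theorem for such K-theory is established in the paper, and even in the $r=1$ case the target K-group is never computed --- injectivity there is proved by exhibiting linearly independent images $\tilde{\imath}_*\circ\tilde{\imath}^*(\mathbb{A}_{\yT})$, $\yT\in\sytn$, via the affine cover and torus-fixed points, not by determining $\xKthqa(\xCatnfrr)$. Freeness, or even torsion-freeness, of the target over $\CC[\qpmo,\bxa]$, which you correctly flag as necessary, is likewise unknown. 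Your rank count $r^n\,n!$ for $\mfHnr$ is correct, and your weight-filtration idea for extending injectivity is consistent with what the paper asserts can be done; but none of the three obstacles you name is overcome in your sketch, so the statement remains, as in the paper, a conjecture rather than a theorem.
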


The diagram~\eqref{eq:simplesq} is a  part of a larger commutative cube
\begin{equation}
\label{eq:cbcdgm}
\begin{tikzcd}[column sep=large,row sep=large]
&\mfBraffn \ar[rr,"\xPaff"]
\ar[dd,near end,"\xqtm"]
\ar[dl,"\fsur"']
\ar[dr,"\xPct"]
\ar[dddl,"\fct"']
&
&
\xCatn
\ar[dd,"\xKth"]
\ar[dl,"\ffr"]
\\
\mfBrn
\ar[dd,dashed,"\xqtm"']
\ar[rr,crossing over,dashed,near end,"\hPhi"]
&&
\xCatnfrr
\\
&
\mfHaffn \ar[rr,near end,"\xpaff"]
\ar[dl,near start,"\fchifr"']
&
&
\xKthq(\xCatn)
\ar[dl]
\\
\mfHnr
\ar[rr,"\xphict"]
&
&
\xKthqa(\xCatnfrr)
\ar[from=2-3,near end,crossing over,"\xKth"]
\end{tikzcd}
\end{equation}
in which the dashed arrows appear only when $\xr=1$.
This case is special,
because
various pieces of evidence compel us to promote the results of \cite{OblomkovRozansky16} to the
\begin{conj}
  There is an isomorphism of monoidal categories
  \[\mathcal{H}(S_n)\simeq \mathbf{MF}^{\xfr}_{n,1}.\]

\end{conj}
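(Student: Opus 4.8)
The plan is to construct a monoidal functor
\[
F\colon \xHm(S_n)\longrightarrow \mathbf{MF}^{\xfr}_{n,1}
\]
and to prove that it is an equivalence. The decategorification of the desired statement is the isomorphism $\mfHn\cong\xKthqa(\mathbf{MF}^{\xfr}_{n,1})$, which is the $r=1$ instance of the preceding conjecture; the $r=1$ case of Theorem~\ref{thm:Ktheo} already supplies the injection $\mfHn\hookrightarrow\xKthqa(\mathbf{MF}^{\xfr}_{n,1})$, and its surjectivity will drop out as a byproduct of the categorical argument. Thus the real content is the lift from $K$-theory to morphism spaces. Both sides are idempotent-complete Krull--Schmidt monoidal categories whose graded morphism spaces are free modules over the polynomial ring $R=\CC[\mfh]$ (the equivariant parameters on the Cartan, which appear as functions on $\mfb$ in the presentation~\eqref{eq:affcat}); it therefore suffices to produce a monoidal $F$ that is fully faithful with generating essential image.

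First I would fix the functor on generators. The Hecke category $\xHm(S_n)$ is interpreted as the homotopy category $K^{\mathrm{b}}(\SBimn)$ of Soergel bimodules, monoidally generated by the Bott--Samelson objects $B_{s_i}=R\otimes_{R^{s_i}}R(1)$, $i=1,\dots,n-1$. I would send $B_{s_i}$ to the matrix factorization supported on the minimal parabolic correspondence $\bcXfrtPv{i}$ --- the locus of pairs of flags agreeing away from the $i$-th step --- which is the natural geometric incarnation of $R\otimes_{R^{s_i}}R$; after extending to homotopy categories, $F$ intertwines the Rouquier-complex homomorphism $\mfBrn\to K^{\mathrm{b}}(\SBimn)$ with the geometric homomorphism $\hPhi\colon\mfBrn\to\mathbf{MF}^{\xfr}_{n,1}$. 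One then verifies that the defining relations of $\SBimn$ hold for the images: the idempotent decomposition $F(B_{s_i})^{\otimes 2}\cong F(B_{s_i})(1)\oplus F(B_{s_i})(-1)$, the far-commutativity $F(B_{s_i})\otimes F(B_{s_j})\cong F(B_{s_j})\otimes F(B_{s_i})$ for $|i-j|>1$, and the categorified (Zamolodchikov) braid relation matching the two indecomposable summands of $F(B_{s_i})\otimes F(B_{s_{i+1}})\otimes F(B_{s_i})$ and $F(B_{s_{i+1}})\otimes F(B_{s_i})\otimes F(B_{s_{i+1}})$. Each is an explicit computation with the convolution of the correspondences $\bcXfrtPv{i}$, of the same nature as the braid-relation checks already carried out for $\xPaff$ in~\cite{OblomkovRozansky16}.

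The crux is full faithfulness. On the source, Soergel's hom formula (as proved by Elias--Williamson) gives, for words $\underline w,\underline w'$ in the generators, that $\Hom_{\SBimn}(\mathrm{BS}(\underline w),\mathrm{BS}(\underline w'))$ is a graded-free $R$-module of graded rank $\langle \underline H_{\underline w},\underline H_{\underline w'}\rangle$ in $\mfHn$. On the target I would compute the corresponding spaces in $\mathbf{MF}^{\xfr}_{n,1}$ between convolutions of the objects $F(B_{s_i})$. The efficient route is to apply \Knrrp\ and dimensional reduction --- the same moves that produced~\eqref{eq:affcat} --- to replace the framed, $r=1$ matrix-factorization category by an equivariant derived category of coherent sheaves on a Steinberg-type variety, where $\Hom$ and $\Ext$ between images of the parabolic correspondences can be evaluated by localization to the $\Tqta$-fixed points. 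The objective is to show these are again graded-free of the same rank $\langle \underline H_{\underline w},\underline H_{\underline w'}\rangle$; matching the two computations then yields full faithfulness. This is the \emph{main obstacle}: it demands a clean structural description of the reduced coherent-sheaf category together with the purity/formality (vanishing of higher $\Ext$'s) needed to promote the $K$-theoretic equality of classes to an equality of graded hom ranks.

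Finally, for essential surjectivity I would show that every object of $\mathbf{MF}^{\xfr}_{n,1}$ is a direct summand of a convolution of the $F(B_{s_i})$, i.e. that the geometric indecomposables are indexed, as for Soergel bimodules, by $w\in S_n$, with one summand $F(B_w)$ splitting off each Bott--Samelson object of a reduced word for $w$. This follows once the hom computation above is in place, since it pins down the splitting idempotents via the Krull--Schmidt property. Monoidality of $F$ --- that convolution of the correspondences realizes the tensor product $-\otimes_R-$ of bimodules --- is built into the convolution of~\eqref{eq:affcat} and is checked on generators alongside the relations. A fully faithful monoidal functor with generating essential image between idempotent-complete categories is an equivalence, which is the assertion; its decategorification simultaneously settles the surjectivity left open by Theorem~\ref{thm:Ktheo}.
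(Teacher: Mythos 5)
The statement you are trying to prove is stated in the paper as a \emph{conjecture}: the authors offer no proof, only supporting evidence (the injectivity part of Theorem~\ref{thm:Ktheo} for $r=1$ and the braid-group action of \cite{OblomkovRozansky16}). So there is no proof in the paper to compare yours against, and your text should be judged as a proposed proof of an open problem. As such, it is a reasonable and well-informed \emph{strategy} --- it is essentially the Soergel-theoretic template for proving any categorification statement of this kind --- but it is not a proof, and the gap is exactly where you flag it. The entire mathematical content is concentrated in the full-faithfulness step, which you describe as an ``objective'' rather than carry out: you need that the graded hom spaces in $\mathbf{MF}^{\xfr}_{n,1}$ between convolutions of the objects attached to the parabolic correspondences $\bcXfrtPv{i}$ are graded-free over $R$ of rank $\langle \underline H_{\underline w},\underline H_{\underline w'}\rangle$. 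The route you sketch --- localization to $\Tqta$-fixed points --- only computes Euler characteristics, i.e.\ classes in $\xKthqa$; upgrading a $K$-theoretic equality to an equality of graded hom ranks requires the vanishing of higher Ext's (formality/parity) for these objects, which is precisely the hard unproven content. Note also that your fallback decategorified claim is circular: surjectivity of $\xphict$ is itself conjectural in the paper, and you propose to derive it from the categorical equivalence whose proof requires the hom computation you have not done.

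A second, more structural gap: you assert at the outset that ``both sides are idempotent-complete Krull--Schmidt monoidal categories whose graded morphism spaces are free modules over $R$.'' For $\xHm(S_n)=K^{\mathrm{b}}(\SBimn)$ this is Soergel's hom formula (Elias--Williamson), but for the matrix-factorization side none of this is established --- the category $\mathbf{MF}^{\xfr}_{n,1}$ is defined via curved $B\times B$-equivariant data on the singular space $\cXt^{\xfrm}_{2,1}$, and neither Krull--Schmidt, nor idempotent completeness, nor freeness of homs over $R$ is known there. Proving those structural facts is essentially equivalent to the conjecture itself (given them, the Soergel-category machinery would yield the equivalence more or less formally), so invoking them as a starting hypothesis assumes what is to be shown. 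Your essential-surjectivity argument inherits the same dependence, since the splitting of indecomposables $F(B_w)$ out of Bott--Samelson convolutions uses Krull--Schmidt plus the hom formula. In short: correct skeleton, but every load-bearing step is deferred, and the paper itself regards the statement as open.
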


The LHS of the conjecture is the Hecke category that has many incarnations, the most algebraic of which is the category \(\SBimn\) of Soergel bimodules \cite{Soergel00}.

A categorification of the chain of homomorphisms $\fchifrrrp\colon\mfHr\rightarrow\mfHrp$ can be added to the diagram~\eqref{eq:cbcdgm}:
%
\[
\begin{tikzcd}[column sep=0.6cm,row sep=0.5cm]
&&\mfBraffn \ar[rrr,"\xPaff"]
\ar[ddd,"\xqtm"]
\ar[drr,"\xPct"]
\ar[ddddl,bend right=25,"\fct"]
\ar[dddddll, bend right=30]
\ar[ddr]
&
&
&
\xCatn
\ar[ddd,"\xKth"]
\ar[dl,"\ffr"]
\\
&&
&&
\xCatnfrr
\ar[dl,"\ffrrrp"]
\\
&&&
\xCatnfrrp
\\
&&
\mfHaffn \ar[rrr,"\xpaff"]
\ar[dl,near start,"\fchifr"']
&
&
&
\xKthq(\xCatn)
\ar[dl]
\\
&\mfHnr
\ar[rrr,"\xphict"]
\ar[dl,very near start,"\fchifrrrp"']
&
&
&
\xKthqa(\xCatnfrr)
\ar[from=2-5,crossing over,"\xKth"]
\ar[dl]
\\
\mfHnrp
\ar[rrr]
&&&
\xKthqa(\xCatnfrrp)
\ar[from=3-4,crossing over,"\xKth"]
\end{tikzcd}
\]
\vspace{1in}

\subsection{Two functors}
\label{sec:two-functors}

The method of proof is of interest in its own right because it clarifies the connection between our work and the work of Gorsky,
Negut and Rasmussen \cite{GorskyNegutRasmussen16}.  Indeed, above
mentioned work revolves around the study of the K-theory of the nested
Hilbert scheme, \(\FHilbn(\CC)\) in notations of
\cite{GorskyNegutRasmussen16}.


The flag Hilbert scheme $\FHilbn$ is the $r=1$ case of the flag instanton moduli space $\FMrn$. Consider an affine space
\[
\tFMfrrn = \bigl\{ (X,Y,\xv)\in\mfb\times\mfb\times\xcVr\;|\; \text{$(X,Y,\xv)$ is stable} \bigr\}
\]
which is a subscheme of $\cXttsf $ defined by the equation $g=1$. A subscheme \(\tFMrn\subset\tFMfrrn\) is defined by the commutativity condition $[X,Y]=0$.
The flag instanton moduli space $\FMrn$ and the `free' flag instanton moduli space \(\FMfrrn\) are defined as quotients of `tilded' spaces by the action of the Borel subgroup $B_n$:
\[
\FMrn = \tFMrn/B_n,\qquad \FMfrrn = \tFMfrrn/B_n.
\]
%

The space 
\(\FMfrrn\)
is smooth, so the category \(\xDTFMfr\) of
two-periodic complexes of coherent sheaves\footnote{Although the complexes are two-periodic, they still have a homological $\ZZ$-grading (rather than only a $\ZZ_2$-grading) coming from the weights of the action of $\Cst$.} is well-behaved. In contrast, the space \(\FMrn\) is very singular,
and we define the category \(\DTFMrn\) as a subcategory of objects
of \(\DTFMfrn\) with the relevant support condition,
see \autoref{sec:quot-spac-categ}.

In section~\ref{sec:funct-tild-tild} we construct two functors:
\[\tilde{\imath}^*: \DhT(\FMfrrn)\rightarrow
  \MF_{B^2}(\bcXtfrr,\Wr),\quad \tilde{\imath}_*:
  \MF_{B^2}(\bcXtfrr,\Wr)\rightarrow
D^{\hat{T}}(\FMrn)
  .\]
The tensor product of sheaves provides the category
\(\DTFMfrn\) with monoidal structure.
\begin{theorem}\label{thm:two-functors}
  The functor \(\tilde{\imath}^*\) is monoidal and for any
  vector bundle \(E\) and complex of sheaves $C$ on \(\FMfrrn\)  we have
  \[\tilde{\imath}_*\circ\tilde{\imath}^*(E\otimes C)=E\otimes
    \left(\tilde{\imath}_*\circ\tilde{\imath}^*( C)\right).\]
\end{theorem}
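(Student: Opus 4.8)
The two assertions are logically independent, and I would prove each by reducing it to a computation on the total space $\cXttsf$ together with the slice $g=1$ that cuts out $\tFMfrrn$. First I would recall the shape of the two functors built in the preceding section: up to the \Knrrp\ reduction, $\tilde{\imath}^{*}$ is the pullback $\rho^{*}$ along a projection $\rho\colon\bcXtfrr\to\FMfrrn$ that forgets the group variable $g$ and retains $(X,Y,\xv)$ modulo $B_n$, followed by tensoring with a fixed Koszul matrix factorization $K_{\Wr}$ of the potential; dually, $\tilde{\imath}_{*}$ is the pushforward-type functor landing on the critical locus $\FMrn=\{[X,Y]=0\}$. For monoidality I would compare the convolution product $\star$ on $\MF_{B^{2}}(\bcXtfrr,\Wr)$ with the tensor product of sheaves on $\FMfrrn$. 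The convolution is assembled by pulling back along the two projections of the correspondence that matches the intermediate flag, tensoring, twisting by the total potential, and pushing forward along the third projection. Since each $\tilde{\imath}^{*}(C)$ is scheme-theoretically pulled back from $g=1$, restricting this correspondence to the relevant locus forces the intermediate group element to the identity, so the defining pushforward is taken along an isomorphism and collapses. What remains is to match potentials: writing $g=\exp(Z)$ and expanding $\Wr(X,g,Y)=\Tr(X\,\Adv{g}Y)$, the term linear in $Z$ is the commutator pairing $\Tr(Z[Y,X])$, so the restricted potential reproduces exactly the Koszul differential of the section $[X,Y]$ on $\FMfrrn$. Granting this, the convolution of two pulled-back objects reduces to their honest tensor product, which is the asserted monoidality.

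For the projection formula I would exploit that $E$ is locally free. Because $\tilde{\imath}^{*}$ is the $\calO_{\FMfrrn}$-linear pullback $\rho^{*}$ followed by tensoring with $K_{\Wr}$, tensoring by a vector bundle passes through it: $\tilde{\imath}^{*}(E\otimes C)\cong\rho^{*}E\otimes\tilde{\imath}^{*}(C)$, where $\rho^{*}E$ is the genuine pullback bundle, concentrated in a single degree and carrying no extra differential. The functor $\tilde{\imath}_{*}$ is a pushforward relative to $\FMfrrn$, so the ordinary projection formula $\tilde{\imath}_{*}(\rho^{*}E\otimes F)\cong E\otimes\tilde{\imath}_{*}(F)$ applies with $F=\tilde{\imath}^{*}(C)$; here the right-hand tensor is taken over $\FMrn$, with $E$ implicitly restricted along $\FMrn\hookrightarrow\FMfrrn$. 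Local freeness of $E$ guarantees that no higher derived corrections intervene and that the identity holds on the nose rather than up to a spectral sequence. Chaining the two isomorphisms yields $\tilde{\imath}_{*}\tilde{\imath}^{*}(E\otimes C)\cong E\otimes\tilde{\imath}_{*}\tilde{\imath}^{*}(C)$.

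I expect the main obstacle to be the monoidality, and within it the control of the pushforward over the intermediate factor of the convolution after restriction to $g=1$: one must verify that the relevant fiber product has the expected dimension, so that this pushforward carries no higher derived terms, and that the Koszul matrix-factorization data glue $B_n\times B_n$-equivariantly. By contrast the projection formula is essentially formal once the two functors are presented as a pullback and a pushforward that are linear over $\FMfrrn$; the only point needing care there is the compatibility of the two pullbacks of $E$ under $\FMrn\hookrightarrow\FMfrrn$, which follows from functoriality of $\rho^{*}$.
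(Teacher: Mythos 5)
Your monoidality argument follows the same skeleton as the paper's, but the paper's version is shorter and avoids the two places where your sketch is shaky. The slice that produces $\tilde{\imath}^*$ is $g\in B$, i.e. the zero locus of the regular sequence $g_{ij}$, $i>j$ (identified with $\tFMfrrn$ via $p(X,g,Y,\xv)=(\Adv{g}X,Y,\xv)$), not the $g=1$ slice, which enters only through $\tilde{\imath}_*$. The paper then quotes from \cite[section 7.1]{OblomkovRozansky16} that the Koszul matrix factorization $\mathrm{K}^{\Wr}(I_\Delta)$ on this sequence is exactly the unit $\mathds{1}_n$ of the convolution algebra, so monoidality on vector bundles is the one-line computation $\tilde{\imath}^*(E)\bar{\star}\tilde{\imath}^*(F)=\tilde{\imath}^*(E\otimes F)$ because $\mathds{1}_n\bar{\star}\mathds{1}_n=\mathds{1}_n$, and it extends to complexes by finite projective resolutions. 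Your ``collapse'' argument is in effect an attempt to reprove $\mathds{1}_n\bar{\star}\mathds{1}_n=\mathds{1}_n$ by hand, and as stated it is not correct: the support condition forces the intermediate group element into $B$, not to the identity, so $\bar{\pi}_{13}$ restricted to the support is a $B$-fibration rather than an isomorphism; the residual fiber is absorbed by the Chevalley--Eilenberg complex and torus invariants built into $\bar{\star}$, which is precisely the content of the cited unit property. Similarly, the $g=\exp(Z)$ linearization of the potential is not needed here (it belongs to the dg-scheme comparison of the last section); what is actually used downstream is the exact identity $j_e^*(\partial\Wr/\partial g_{ij})=[X,Y]_{ij}$, not a first-order approximation.

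The genuine flaw is in your projection-formula half: $\tilde{\imath}_*$ is not a pushforward. Despite the notation, it is defined as the pullback $j_e^*$ along the embedding $j_e(X,Y,\xv)=(X,e,Y,\xv)$ of the $g=e$ slice, so the ``ordinary projection formula'' for a derived pushforward does not apply to it, and your concern about higher derived corrections is aimed at the wrong functor. The identity nevertheless holds for a more elementary reason, which is what the paper means by saying it follows immediately from the construction: since $p\circ j_e=\mathrm{id}$ and $i_{B*}$ amounts to extension followed by tensoring with the fixed Koszul factorization, one gets $\tilde{\imath}_*\circ\tilde{\imath}^*(C)\cong C\otimes j_e^*\bigl(\mathrm{K}^{\Wr}(I_\Delta)\bigr)$, i.e. $C$ tensored with Koszul complexes on the functions $[X,Y]_{ij}$, and this expression is visibly linear under tensoring with any vector bundle $E$ on $\FMfrrn$. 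So your conclusion stands, but the mechanism you invoke should be replaced by this direct computation; with that repair, and with the unit property taken from \cite{OblomkovRozansky16} rather than your heuristic collapse, your outline matches the paper's proof.
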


\subsection{Outline of the proof}
\label{sec:outline-proof}

Let us use notation \(\Delta_i\), \(i=1,\dots,n\) for the
\xBL\
elements of the affine braid group and \(\delta_i\)
are their images in the finite braid groups and Hecke algebras. The latter are also known as \xJM\ elements.  As it
is shown in \cite{OblomkovRozansky17}, the geometric counter-part of
the Berstein-Lusztig elements are the line bundles \(\mathcal{L}_i\)
over \(\calXr_2\):
\[\xPaff(\Delta_i)=\mathcal{L}_i.\]
We use the same notation for the corresponding line bundles over the
spaces \(\bcXtfrr\) and \(\FMrn\). In particular, we have
\[\phi^r(\delta_i)=\tilde{i}_*(\mathcal{L}_i).\]

A commutative subalgebra $\mfJMnr\subset\mfHnr$ generated by the \xJM\ elements \(\delta_i\) has a special basis.
The algebra $\mfHnr$  is semisimple:
\[\mfHnr=\oplus_{i\in I}\GL(V_i),\]
where \(I\) is the set of irreducible representations of the group \(S_n\ltimes \ZZ^n_r\) and
\(V_i\) are the corresponding vector spaces. The vector space \(V_i\) has a natural basis \(v_i^j\), \(j=1,\dots,d_i\), which we
describe later, and \(\delta_{i;j}\in \GL(V_i)\) is the diagonal matrix unit corresponding to the vector
\(v_i^j\). It is well-known that the algebra \(\mfJMnr\) is isomorphic to the subalgebra generated by the elements \(\delta_{i;j}\).
Let us denote the set of pairs \((i;j)\), \(i\in I\), \(j=1,\dots,d_i\) by \(\mathbf{I}\).

The locus $\FMrnT\subset \FMrn$ invariant with respect to the $\Tqta$-action is zero-dimensional and its points are naturally indentified with the elements of the set  \(\mathbf{I}\).
The scheme \(\FMrn\) has a \(\Tqta\)-equivariant affine cover:
\(\FMrn=\bigcup_{(i;j)\in \mathbf{I}}U_{(i;j)}\). Its charts
\(U_{(i;j)}\) are intersections of the affine charts \(\mathbb{A}_{(i;j)}\) of
\(\FMfrrn\).
Let us denote by
\([\mathcal{O}_{\mathbb{A}_{(i;j)}}]\) the corresponding class in K-theory
\(K_{\CC^*}(\FMfrrn)\). Our main technical result is
\begin{theorem}\label{thm:JMgeo}
  For every \((i;j)\) we have a non-zero \(C_{(i;j)}\in\Qqoa \) such that
  \[\tilde{\imath}_*\circ\tilde{\imath}^*([\mathcal{O}_{\mathbb{A}_{(i;j)}}])
    =C_{(i;j)}\cdot\tilde{\imath}_*(\phi^r(\delta_{(i;j)})).\]
\end{theorem}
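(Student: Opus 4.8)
The plan is to establish Theorem~\ref{thm:JMgeo} by analyzing the local structure of the functor composition $\tilde{\imath}_*\circ\tilde{\imath}^*$ at the $\Tqta$-fixed points of $\FMrn$, exploiting the semisimplicity of $\mfHnr$ and the fact that both sides of the claimed identity are elements of the K-theory $\xKthqa(\xCatnfrr)$ localized at the isolated fixed point $(i;j)$. First I would invoke the structure described in the outline: the fixed locus $\FMrnT$ is zero-dimensional with points indexed by $\mathbf{I}$, and the affine cover $\{U_{(i;j)}\}$ is obtained by intersecting the charts $\mathbb{A}_{(i;j)}$ of the smooth space $\FMfrrn$ with $\FMrn$. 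The key observation is that $\tilde{\imath}^*$ is monoidal (Theorem~\ref{thm:two-functors}), so one can compute $\tilde{\imath}^*([\calO_{\mathbb{A}_{(i;j)}}])$ as a structure sheaf of a chart inside $\MF_{B^2}(\bcXtfrr,\Wr)$, and then apply $\tilde{\imath}_*$.

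The second step is to relate the class $\tilde{\imath}_*(\phi^r(\delta_{(i;j)}))$ to the same local data. Since the \xJM\ elements $\delta_i$ map to the line bundles $\mathcal{L}_i$ under $\xPaff$, and these survive the framing and pushforward to give $\phi^r(\delta_i)=\tilde{\imath}_*(\mathcal{L}_i)$, I would express the diagonal matrix unit $\delta_{(i;j)}$ in terms of idempotents built from the $\delta_i$. Because $\mfJMnr$ is the commutative subalgebra generated by the $\delta_i$ and is isomorphic to the span of the $\delta_{(i;j)}$, there is a polynomial expression (with coefficients in $\Qqoa$) producing each matrix unit from the generators; applying the monoidal functor $\tilde{\imath}^*$ translates this into a tensor-product expression among the line bundles $\mathcal{L}_i$, localized near the fixed point. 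The second displayed identity of Theorem~\ref{thm:two-functors}, namely $\tilde{\imath}_*\circ\tilde{\imath}^*(E\otimes C)=E\otimes(\tilde{\imath}_*\circ\tilde{\imath}^*(C))$, is then the crucial tool: it lets me pull the line-bundle factors out of the pushforward.

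Concretely, I would compute both classes in the localized equivariant K-theory at the fixed point $(i;j)$, where everything reduces to characters of $\Tqta$ acting on tangent spaces and fibers. On the fixed-point side, $[\calO_{\mathbb{A}_{(i;j)}}]$ contributes a local term determined by the weights of the chart $\mathbb{A}_{(i;j)}$, while $\phi^r(\delta_{(i;j)})$ contributes the weight of the line bundle $\mathcal{L}_i$ restricted to that point. Both pushforwards $\tilde{\imath}_*$ introduce the same K-theoretic Euler-class correction coming from the singular embedding $\FMrn\hookrightarrow\FMfrrn$, and the ratio of the two localized classes is therefore a single nonzero rational function $C_{(i;j)}\in\Qqoa$, obtained as a quotient of products of $(1-q^{\bullet}\xa^{\bullet})$-type factors. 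Verifying that this ratio is independent of the auxiliary factors and genuinely nonzero is what pins down $C_{(i;j)}$.

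The hard part will be controlling the behavior of $\tilde{\imath}_*$ on the singular space $\FMrn$, since $\tilde{\imath}_*$ is defined through the support-condition subcategory $\DTFMrn\subset\DTFMfrn$ rather than as an honest pushforward along a smooth map, so the usual localization formulas require justification; in particular one must check that the relevant $\Tqta$-weights at the fixed point are all nonzero (so that the localized Euler classes are invertible) and that the support conditions do not kill the classes in question. I expect the resolution to hinge on the explicit combinatorial description of the charts $\mathbb{A}_{(i;j)}$ and their fixed-point weights, together with the identification of $\FMrnT$ with $\mathbf{I}$, which should make the Euler-class factors completely explicit and confirm that $C_{(i;j)}$ is a well-defined nonzero element of $\Qqoa$.
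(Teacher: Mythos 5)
Your plan takes a genuinely different route from the paper, and it has a real gap at its center. You propose to verify the identity by comparing both sides in equivariant K-theory \emph{localized at the single fixed point} \((i;j)\), with the Euler-class corrections from the singular embedding \(\FMrn\hookrightarrow\FMfrrn\) cancelling. Two problems. First, the theorem asserts an equality of global classes, and neither side is supported at one fixed point: \(\phi^r(\delta_{(i;j)})\) is a polynomial in the line bundles \(\mathcal{L}_i\) times the unit, and \([\mathcal{O}_{\mathbb{A}_{(i;j)}}]\) is the class of a whole chart; agreement of their localizations at \((i;j)\) alone proves nothing, and agreement at \emph{all} fixed points would additionally require injectivity of the localization map on the relevant K-group. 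Second, and more fundamentally, no localization theorem is available for the support-defined category \(\DTFMrn\) (defined by the condition that \(I_{r,n}\) acts null-homotopically) --- you flag this yourself as ``the hard part,'' but it is precisely the machinery the paper does not have: the authors explicitly defer ``the variation of the localization theory'' to future work when discussing surjectivity. So the step you hope will ``hinge on the explicit combinatorial description of the charts'' is not a verification detail; it is the entire unproved content.

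The paper's actual proof avoids localization altogether and runs by induction on \(n\) (for \(r=1\)). The engine is the Aiston--Morton inductive formula \(Q_{\yT}=\mathfrak{ind}(Q_{\yT'})\cdot P(\delta_1)\) with \(P(\delta_1)=\prod_{\square\in\Cout(\lmp)\setminus\yT}(\delta_1-q^{2c(\square)})\), combined with three structural identities: the Markov-move-type formula \(\tilde{\imath}_*\bigl(\overline{\ind}_1(\mathds{1}_1\otimes C)\bigr)=\pi^\bullet\bigl(\tilde{\imath}_*(C)\bigr)\), proved by an explicit Koszul-complex analysis of the push-forward \(\bar{i}_{1*}\); the intertwining \(\pi^\bullet\circ\tilde{\imath}_*\circ\tilde{\imath}^*=\tilde{\imath}_*\circ\tilde{\imath}^*\circ\pi^*\), checked on the structure sheaf via \(\tilde{\imath}_*\circ\tilde{\imath}^*(\mathcal{O}_{\FHilbfrn})=\bigotimes_{i\le j}\mathrm{K}(f_{ij})\); and the projection formula of Theorem~\ref{thm:two-functors}, which you correctly identified as crucial. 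The constant then comes out of the purely combinatorial fact that \(\pi^{-1}(\mathbb{A}_{\yT'})=\bigcup_{\yS}\mathbb{A}_{\yS}\) with \((\mathcal{L}_1-q^{2c(\square)})\mathbb{A}_{\yS}=0\), so \(P(\mathcal{L}_1)\) annihilates every chart except \(\mathbb{A}_{\yT}\) and yields \(C_{\yT}\) inductively as a product of factors \(q^{2c(\square')}-q^{2c(\square)}\), manifestly nonzero in \(\Qqoa\). Note that this eigenvalue/annihilation mechanism is exactly what your single-point localization cannot see: the point of \(P(\mathcal{L}_1)\) is to kill the \emph{other} charts sitting over \(\mathbb{A}_{\yT'}\). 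If you want to salvage your approach, you would first have to develop fixed-point localization for \(\DTFMrn\) and prove injectivity of the localization map on the image of \(\tilde{\imath}_*\), which is an open problem in this framework, not a routine check.
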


In this paper we treat the case \(r=1\) of the above theorem. In this case the combinatorics of the affine cover is studied in the previous papers
\cite{OblomkovRozansky16,OblomkovRozansky17} and we rely on the results from these papers. For the general \(r\) the construction of the cover is analogous but
we postpone the detailed description for future publication.



Thus the restriction \(\phi^r|_{\mfJMnr}\) is an injective homomorphism and the
standard argument implies that the homomorphism \(\phi^r\) is also
injective.
We expect that the surjectivity of the homomorphism follows from the
variation of the localization theory and we hope to return to this part of the
theory in our future publications.

The functors \(\tilde{\imath}_*\), \(\tilde{\imath}^*\) should be compared to the same-named functors of~\cite{GorskyNegutRasmussen16}.
The key properties of the functors from our paper mimic the conjectural properties of the functors from \cite{GorskyNegutRasmussen16} and
it is very tempting to find a direct link to the setting of our previous paper \cite{OblomkovRozansky16}. The key step
would
be a functor between our support-defined derived category \(D_{\Tqta}(\FMon)\)
and the coherent DG category of \(\FMon =\FHilb_n(\CC^2)\) from \cite{GorskyNegutRasmussen16}.
In section~\ref{sec:dg-schem} we discuss potential challenges of constructing
such functor and describe a second order approximation to this functor.

Let us briefly discuss the structure of the text. In the section~\ref{sec:convolution-algebras} we recall the main constructions of
\cite{OblomkovRozansky16} concerning the convolution algebra structure on the
category of matrix factorizations. We explain why our earlier constructions
from \cite{OblomkovRozansky16} work in the case of higher dimesional framing and
show that after passing to \(K\)-theory we obtain a homomorphism from the
cyclotomic Hecke algebra to the convolution algebra of matrix factorizations.
In the section~\ref{sec:funct-tild-tild} we construct the functors \(\tilde{\imath}_*\), \(\tilde{\imath}^*\) and prove theorem~\ref{thm:two-functors}.
In the section~\ref{sec:jw-projectors} we explain the geometry behind the JW projectors and prove injectivity part of theorem~\ref{thm:Ktheo}.
Finally, in the section~\ref{sec:dg-schem} we discuss a relation between the
dg scheme structure of the flag Hilbert scheme and  the matrix factorizations that we study.

{\bf Acknowledgements:}
L.R. is very thankful to D.~Arinkin and I.~Losev for many illuminating discussions.
A.O.  is very thankful to R.~Bezrukavnikov and A.~Negu\c{t} for many illuminating
discussions. A.O. was partially supported
by NSF CAREER grant DMS-1352398.

\section{Convolution Algebras}
\label{sec:convolution-algebras}
In this section we recall the construction for the convolution algebra from \cite{OblomkovRozansky16} and explain why the pull-back map \(\fgt^*\) is the algebra homeomorphism.
We recall all necessary results from \cite{OblomkovRozansky16,OblomkovRozansky17}. Finally, we provide a proof for the Hecke relation inside our K-theoretic model.

\subsection{Affine Braid group realization}
\label{sec:affine-braid-group}


As explained in \cite{OblomkovRozansky16}, the category \(\xCatnx\) of~\eqref{eq:affcat} has a natural convolution product. An object \(\calF\in \xCatnx\)  is a quadruple
\[(M,D,\partial_l,\partial_r)\]
where \(M\) is \(\ZZ_2\)-graded \(\CC[\cXtt]\)-module and \(D\) is an endomorphism of \(M\) such that \(D^2=\Wr\); \(\partial_r,\partial_l: \Hom_{\CC[\calXr_2]}(M\otimes\Lambda^\bullet\frn,M\otimes\Lambda^{<\bullet}\frn)\) such that
\[\Dtot=D+\partial_l+\partial_r+\dce,\quad \Dtot^2=\Wr,\]
with \(\dce:
M\otimes U(\frn^2)\otimes \Lambda^\bullet(\frn^2)\rightarrow M\otimes U(\frn^2)\otimes \Lambda^{\bullet-1}(\frn^2)\) Chevalley-Eilenberg differential; we also impose
\(T^2\subset B^2\)-equivariance condition on all maps in the data of \(\calF\in \xCatnx\);
for details see \cite[section 3.4]{OblomkovRozansky16}.

The space \[\cXth:=\frb\times G^2\times \frn\] has a natural \(B^3\)-equivariant structure \cite[section 5.3]{OblomkovRozansky16} and there are maps
\(\bar{\pi}_{ij}:\cXth\rightarrow\cXtt\), see \cite[section 5.3]{OblomkovRozansky16}. The maps \(\bar{\pi}_{12},\bar{\pi}_{23}\) are not equivariant with respect to the middle \(B\) in the product
\(B^3\) but there is a natural way to endow the pull-back \(\bar{\pi}_{12}^*\otimes\bar{\pi}_{23}^*(\calF\boxtimes\calG)\), \(\calF,\calG\in \xCatnx\) with
\(B^3\)-equivariant structure \cite[section 5.4]{OblomkovRozansky16}, we denote this enhanced pull-back as \(\bar{\pi}_{12}^*(\calF)\otimes_B\bar{\pi}_{23}^*(\calG)\). It is shown in
\cite[corollary 5.7]{OblomkovRozansky16} that the binary operation:
\[\calF\bar{\star}\calG:=\bar{\pi}_{13*}\left(\CE_{n^{(2)}}(\bar{\pi}_{12}^*\otimes_B\bar{\pi}_{23}^*)(\calF\boxtimes\calG)^{T^{(2)}}\right)\]
defines an associative product on \(\xCatnx\).

In \cite[section 7.3]{OblomkovRozansky16} we define matrix factorizations \(\calCr_{\pm}^{(k)}\in \xCatnx \) \(k=1,\dots,n-1\), which represent elementary braids. It is shown in \cite[sections 9,10]{OblomkovRozansky16} that these matrix factorizations indeed satisfy the braid
relations:
\[\calCr_+^{(i)}\bar{\star}\calCr_+^{(i+1)}\bar{\star}\calCr_+^{(i)}\sim\calCr_+^{(i+1)}\bar{\star}\calCr_+^{(i)}\bar{\star}\calCr_+^{(i+1)},\]
\[\calCr_+^{(i)}\bar{\star}\calCr_-^{(i)}\sim \calCr_{\parallel}\]
where \(\sim\) is an explicit homotopy and \(\calCr_{\parallel}\) is the unit in the convolution algebra.


In \cite[section 3]{OblomkovRozansky17} we prove
\begin{theorem}
  There is a homomorphism:
$$\xPaff\colon \mfBraffn\rightarrow (
\xCatny
,\bar{\star}).$$
\end{theorem}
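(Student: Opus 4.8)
The plan is to define $\xPaff$ on the generators of $\mfBraffn$ and then check that the defining relations \eqref{eq:brrel1}--\eqref{eq:afrel2} are sent to homotopy equivalences in $(\xCatny,\bar{\star})$, with the images of the generators invertible for $\bar{\star}$. I would set $\xPaff(\sigma_i):=\calCr_+^{(i)}$, the elementary-braid matrix factorizations of \cite[\S 7.3]{OblomkovRozansky16}, and $\xPaff(\Delta_n):=\mathcal{L}_n$, the tautological line bundle on $\calXr_2$ attached to the last strand, i.e.\ the object realizing the \xBL\ generator. Since $\mfBraffn$ is presented by generators and relations, the whole statement reduces to these verifications.

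First I would dispose of the finite part. The braid relation \eqref{eq:brrel1} for the $\calCr_+^{(i)}$ is exactly the explicit homotopy $\calCr_+^{(i)}\bar{\star}\calCr_+^{(i+1)}\bar{\star}\calCr_+^{(i)}\sim\calCr_+^{(i+1)}\bar{\star}\calCr_+^{(i)}\bar{\star}\calCr_+^{(i+1)}$ produced in \cite[\S\S 9--10]{OblomkovRozansky16}. Invertibility of $\xPaff(\sigma_i)$ is guaranteed by $\calCr_+^{(i)}\bar{\star}\calCr_-^{(i)}\sim\calCr_{\parallel}$, so that $\xPaff(\sigma_i^{-1})=\calCr_-^{(i)}$, while $\mathcal{L}_n$ is invertible under $\bar{\star}$ with inverse the dual line bundle $\mathcal{L}_n^{-1}$. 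The distant-commutation relation \eqref{eq:brrel2} for $|i-j|>1$ follows from a support argument: the convolution kernels $\calCr_\pm^{(i)}$ and $\calCr_\pm^{(j)}$ only modify the disjoint strand pairs $(i,i+1)$ and $(j,j+1)$, so the two orders of convolution agree canonically.

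The affine relations are the essential new content. Relation \eqref{eq:afrel2}, $\sigma_i\Delta_n=\Delta_n\sigma_i$ for $i<n-1$, reduces to the commutation of $\bar{\star}$-convolution by $\mathcal{L}_n$ (a tensoring functor localized at the $n$-th strand) with $\calCr_\pm^{(i)}$ (localized at strands $(i,i+1)$), and follows from the projection formula for the pushforward $\bar{\pi}_{13*}$ used to define $\bar{\star}$. Relation \eqref{eq:afrel1} is the genuine affine braid move and is where I expect the main obstacle to lie. The cleanest route is to observe that, modulo the relations already established, \eqref{eq:afrel1} is equivalent to the commutativity of $\mathcal{L}_n$ with its conjugate $\calCr_+^{(n-1)}\bar{\star}\mathcal{L}_n\bar{\star}\calCr_-^{(n-1)}$. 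I would therefore first compute this conjugate explicitly and identify it with a second line bundle $\mathcal{L}_{n-1}$ on $\calXr_2$; this is the delicate step, requiring an explicit homotopy in the convolution algebra on the rank-two configuration of strands $n-1,n$, analogous in spirit to the Reidemeister-III homotopy of \cite[\S\S 9--10]{OblomkovRozansky16}. Given that identification, the commutativity $\mathcal{L}_n\bar{\star}\mathcal{L}_{n-1}\sim\mathcal{L}_{n-1}\bar{\star}\mathcal{L}_n$ follows from the commutativity of the tensor product of line bundles. Once \eqref{eq:brrel1}, \eqref{eq:brrel2}, \eqref{eq:afrel1}, \eqref{eq:afrel2} are all verified and the generators are shown invertible, $\xPaff$ descends to the desired homomorphism $\mfBraffn\rightarrow(\xCatny,\bar{\star})$.
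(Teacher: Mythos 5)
Your overall architecture is the same as the paper's: the paper delegates this theorem to \cite[section 3]{OblomkovRozansky17}, and the ingredients it recalls are exactly your checklist --- $\sigma_i\mapsto\calCr_+^{(i)}$ with the braid relations and invertibility supplied by the explicit homotopies of \cite[sections 9, 10]{OblomkovRozansky16}, and the affine generator realized as a character twist of the unit $\calCr_{\parallel}$, so that all \xBL\ elements become twisted units $\xPaff(\Delta_i)=\xPaff(1)\langle\chi_i,0\rangle$ and commute for the trivial tensor-product reason you invoke. So the plan is sound in outline, and your finite-type verifications are fine.

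However, there is a genuine error in your reduction of the affine relation \eqref{eq:afrel1}: you conjugate with an inverse crossing. Writing $\sigma=\sigma_{n-1}$ and $\Delta=\Delta_n$, relation \eqref{eq:afrel1} reads $(\sigma\Delta\sigma)\Delta=\Delta(\sigma\Delta\sigma)$, i.e.\ it is the commutativity of $\Delta_n$ with $\sigma_{n-1}\Delta_n\sigma_{n-1}$ --- \emph{both} crossings positive --- and by \eqref{BraidMurphy} that conjugate is precisely the \xBL\ element $\Delta_{n-1}$. Your conjugate $\calCr_+^{(n-1)}\bar{\star}\mathcal{L}_n\bar{\star}\calCr_-^{(n-1)}$ corresponds to $\sigma\Delta\sigma^{-1}=(\sigma\Delta\sigma)\sigma^{-2}$, so commutativity of $\Delta_n$ with it is equivalent, granting $[\Delta_n,\Delta_{n-1}]=1$, to $[\Delta_n,\sigma_{n-1}^2]=1$; this is \emph{false} in $\mfBraffn$ (it already fails in the affine Hecke quotient, where the \xBL\ generator adjacent to $\sigma_{n-1}$ does not commute with $\sigma_{n-1}^2$, and geometrically a loop of the $n$-th strand around the flagpole does not commute with the full twist of strands $n-1,n$). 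Consequently your claimed equivalence ``modulo the relations already established'' does not hold, and the identification of $\calCr_+^{(n-1)}\bar{\star}\mathcal{L}_n\bar{\star}\calCr_-^{(n-1)}$ with $\mathcal{L}_{n-1}$ cannot hold either --- if it did, your argument would prove a relation that is false in the group, contradicting the expected injectivity discussed in the paper. The repair is straightforward and brings you back to the paper's route: prove instead the homotopy equivalence $\calCr_+^{(n-1)}\bar{\star}\mathcal{L}_n\bar{\star}\calCr_+^{(n-1)}\sim\mathcal{L}_{n-1}$, which is exactly the content of the recalled theorem $\xPaff(\Delta_i)=\xPaff(1)\langle\chi_i,0\rangle$ of \cite{OblomkovRozansky17}; then the commutativity of the two character twists of the unit yields \eqref{eq:afrel1} as you intended.
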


Given a matrix factorization $\mathcal{C}$ in $\xCatny$ and two characters $\xi,\tau: B\rightarrow \CC^*$ we define the twisted matrix factorization
$\mathcal{C}\langle\xi,\tau\rangle$ to be the matrix factorization $\mathcal{C}\otimes \CC_\xi\otimes\CC_\tau$. In these terms we have

\begin{theorem} \cite{OblomkovRozansky17} For any $i=1,\dots, n$ we have
$$\xPaff(\Delta_i)=\xPaff(1)\langle\chi_i,0\rangle.$$
\end{theorem}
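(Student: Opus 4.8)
The plan is to combine the identity $\xPaff(\Delta_i)=\mathcal{L}_i$ recalled above with the explicit Koszul description of the unit $\xPaff(1)=\calCr_\parallel$, reducing the claim to an identification of $\mB\times\mB$-equivariant line bundles on the diagonal locus of $\cXtt$. The essential geometric point is that the \xBL\ generator $\Delta_i$ neither permutes strands nor introduces a crossing; in the pictorial presentation it merely winds the $i$-th strand around the flag pole. Hence its matrix factorization should carry the same underlying module and differential $D$ as $\calCr_\parallel$, the two differing only through their $\mB\times\mB$-equivariant structure.

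First I would treat $i=n$, where $\Delta_n$ is the flag-pole generator itself. Tracing the construction of $\xPaff$ through the chain of equivalences \eqref{eq:cator}--\eqref{eq:affcat} --- the reduction of $\mG$-equivariance to $\mB$-equivariance and the \Knrrp\ removing the factor of $\mfn$ --- I would verify that winding the $n$-th strand multiplies the equivariant data of $\calCr_\parallel$ by the tautological character $\chi_n\colon b\mapsto b_{nn}$ on the left $\mB$ and leaves the right $\mB$ untouched, yielding $\xPaff(\Delta_n)=\calCr_\parallel\langle\chi_n,0\rangle$.

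To pass to general $i$, I would argue uniformly: each $\mathcal{L}_i$ is a line bundle with the same diagonal support as $\calCr_\parallel$, so it is determined by a pair of $\mB$-characters, and a direct weight count through the Knörrer reduction reads off the pair $(\chi_i,0)$. Alternatively one can induct on $i$ using the monoidality of $\xPaff$ and the conjugation relation between $\Delta_i$ and $\Delta_{i+1}$: the braid homomorphism property converts it into a convolution $\xPaff(\sigma_i)\bar\star\mathcal{L}_{i+1}\bar\star\xPaff(\sigma_i)$ (inverses as dictated by the chosen convention), and one checks that conjugation by the crossing $\sigma_i$ transports $\chi_{i+1}$ to $\chi_i$ while preserving the triviality of the right twist.

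The main obstacle will be tracking the equivariant structure precisely through these two reductions, in particular ensuring that the $\Tqt$-grading contributes no spurious weight shift, so that the left character is exactly $\chi_i$ rather than a product or a shift of it, and that the convolution with $\sigma_i$ implements a clean permutation of characters without correction terms from the Chevalley-Eilenberg differential $\dce$ or the $T^2$-invariants entering the definition of $\bar\star$. The technical heart is verifying that the character twist $\calCr_\parallel\langle\chi_i,0\rangle$ is compatible with the full convolution data $(M,D,\partial_l,\partial_r)$, so that it is a genuine object of $\xCatny$ representing $\Delta_i$.
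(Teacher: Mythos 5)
There is a genuine gap, in fact two. First, a circularity: in this paper the symbol $\mathcal{L}_i$ is \emph{defined} to be $\calCr_{\parallel}\langle\chi_i,0\rangle=\xPaff(1)\langle\chi_i,0\rangle$, so the ``identity $\xPaff(\Delta_i)=\mathcal{L}_i$ recalled above,'' which you take as your starting input, is verbatim the statement to be proved. (Note also that the present paper contains no proof at all --- the theorem is imported from \cite{OblomkovRozansky17}, where it is established by explicit convolution computations with the matrix factorizations $\calCr_{\pm}^{(i)}$.) Your base case $i=n$ is fine in spirit: $\Delta_n$ is a generator and tracing the construction of $\xPaff$ does exhibit $\xPaff(\Delta_n)$ as the unit with its left $\mB$-equivariant structure twisted by $\chi_n$.

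The serious failure is in your inductive step. By \eqref{BraidMurphy} one has $\Delta_i=\sigma_i\,\Delta_{i+1}\,\sigma_i$ with two \emph{positive} crossings; $\Delta_i$ is not a conjugate of $\Delta_{i+1}$, so ``conjugation by the crossing transports $\chi_{i+1}$ to $\chi_i$'' is the wrong mechanism, and your hedge ``(inverses as dictated by the chosen convention)'' is precisely where the argument breaks rather than a convention to fix later. The identity actually required is the categorified Bernstein--Lusztig exchange
\begin{equation*}
\calCr_{\parallel}\langle\chi_{i+1},0\rangle\;\bar{\star}\;\calCr_{+}^{(i)}\;\simeq\;\calCr_{-}^{(i)}\;\bar{\star}\;\calCr_{\parallel}\langle\chi_i,0\rangle,
\end{equation*}
in which the crossing \emph{changes sign} as the twisted unit slides past it. This cannot follow from equivariant bookkeeping: the twist $\langle\chi,0\rangle$ only modifies the $\mB\times\mB$-structure, whereas the equivalence must interchange $\calCr_{+}^{(i)}$ and $\calCr_{-}^{(i)}$, which have different underlying differentials $D$; establishing it is a genuine homotopy computation and is the technical heart of \cite{OblomkovRozansky17}. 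Indeed, if your ``clean permutation of characters under conjugation,'' i.e.\ $\calCr_{+}^{(i)}\bar{\star}\,\mathcal{L}_{i+1}\bar{\star}\,\calCr_{-}^{(i)}\simeq\mathcal{L}_i$, held alongside the theorem itself, $\calCr_{+}^{(i)}\bar{\star}\,\mathcal{L}_{i+1}\bar{\star}\,\calCr_{+}^{(i)}\simeq\mathcal{L}_i$, then cancelling $\calCr_{-}^{(i)}$ on the left and the invertible twisted unit $\mathcal{L}_{i+1}$ would give $\calCr_{+}^{(i)}\simeq\calCr_{-}^{(i)}$, i.e.\ $\sigma_i^2$ acting trivially --- incompatible with $\xPaff$ being a braid-group homomorphism with the injectivity properties this paper asserts, and already false in K-theory, where $\sigma_i$ satisfies the quadratic relation \eqref{eq:herel} with generic $q$. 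Your alternative ``direct weight count'' route has the same circularity as the first point: knowing that a line bundle supported on the diagonal is determined by a pair of characters does not help until one has shown that the $(2(n-i)+1)$-fold convolution representing $\Delta_i$ collapses, up to homotopy, to such a twisted unit at all.
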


The mutually commuting Bernstein-Lusztig (BL) elements $\Delta_i\in \mfBraffn$ are defined as follows:
\begin{equation}\label{BraidMurphy}
\Delta_i=\sigma_{i}\cdots \sigma_{n-2}\sigma_{n-1}
\Delta_n\sigma_{n-1}\sigma_{n-2}\cdots \sigma_{i}.
\end{equation}


\subsection{Hecke homomorphism}

In order to define the convolution in the category $\xCatnfrr$ of~\eqref{eq:frcat}, we have to introduce a stable framed version $\tpXrfr$ of the `triple' variety $\tpX$. The projections \(\bar{\pi}_{ij}\) extend naturally to
\begin{equation}
\label{eq:prjx}
\bar{\pi}_{ij}\colon \tpX\times\xcVr\rightarrow \cXtt\times\xcVr
\end{equation}
providing a definition of the convolution on the category \( \xMFBtlv{\cXtt\times\xcVr,\xW}\).

We define the stable framed triple variety $\tpXrfr$ as an open subset of $\tpX\times\xcVr$ which is an intersection of pre-images of the maps~\eqref{eq:prjx}:
\[\calXr_{3,r}^{\xfr}:=\bar{\pi}_{12}^{-1}(\calXr_{2,r}^{\xfr})\cap
  \bar{\pi}_{23}^{-1}(\calXr_{2,r}^{\xfr})\cap \bar{\pi}_{13}^{-1}(\calXr_{2,r}^{\xfr}).\]
%
%
%
%
Using the restrictions  of the pull-backs and push-forwards on the open sets we
define the convolution structure on the category \(\xMF^{\xfr}_{n,r}\).
  Moreover the pull-back along
the inclusion map \(\fgt\colon\cXttsf \rightarrow\cXtts\) respect the convolution:

\begin{lemma}\label{lem:homo}
  The pull-back map \(\fgt^*\) is a homomorphism of the convolution algebras.
\end{lemma}
\begin{proof}
  For \(r=1\) the statement is shown in \cite[lemma 12.2]{OblomkovRozansky16}, the case of general \(r\) is analogous.
  Indeed, according to the \cite[lemma 12.3]{OblomkovRozansky16} we need to analyze the behavior of the critical locus
  \(\cXttscr\),  which is defined by the ideal \(\xIcr\) of partial derivatives of
  \(\Wr\). We need to show that
  \[\bar{\pi}_{13}^{-1}(\cXttsf)\cap\bar{\pi}_{12}^{-1}(\cXttsfcr)
    \cap\bar{\pi}_{32}^{-1}(\cXttsfcr)=
  \bar{\pi}_{13}^{-1}(\cXttsf)\cap\bar{\pi}_{12}^{-1}(\cXttscr)
  \cap\bar{\pi}_{32}^{-1}(\cXttscr).\]
The argument identical to the one from \cite[lemma 12.2]{OblomkovRozansky16} implies the equality.
\end{proof}


The trivial line bundle over \(\tFMrn\) twisted by the \(B_n\)-character \(\chi_k\) descends to the
line bundle over \(\FMrn\) which we denote \(\mathcal{L}_k\). We also use the same notation \(\mathcal{L}_k\)
for the element  \(\calC_{\parallel}\langle\chi_i,0\rangle\) and we also use notation
\(\calO\) for \(\calC_{\parallel}\).
The torus \(\CC^*_{\mathsf{a}}\) acts on \(\calXr_{2,r}^{\xfr}\) and we use notation
\(\mathsf{a}_i\cdot\) for the operation on \(\xCatnfrr\) that twist matrix factorizations by the \(i\)-th character of \(\CC_{\mathsf{a}}\).

 The homomorphism $\fsur\colon\mfBraffn\rightarrow \mfBrn$ from the top line of the diagram~\eqref{eq:cmtsq} maps the \xBL\ elements of $\mfBraffn$ to the \xJM\ elements of $\mfBrn$:
%
$$ \mathfrak{fgt}(\Delta_n)=1,\quad \mathfrak{fgt}(\Delta_i)=\delta_i, \quad i=1,\dots,n-1.$$

In the case \(r=1\) we show in \cite[section 3]{OblomkovRozansky17} that the top square of the
cube~\eqref{eq:cbcdgm} commutes. For general \(r\) we can not draw the analogous cube because there is
no obvious analog of \(\Br_n\) for higher \(r\). The best approximation to the commuting square
from above is

\begin{prop} For any \(r\) there is a homotopy equivalence in category \(\xCatnfrr\):
  \[\mathrm{Kos}[\mathcal{L}_n\xrightarrow{\xv_n}\oplus_{i=1}^r\mathbf{a}_i\cdot\calO]\sim 0,\]
  where \(\xv_n\) is the vector of the last coordinates of \(\xv\) at the point
  \((X,g,Y,\xv)\in \cXttsf\) and \(\mathrm{Kos}\) is the Koszul complex.
\end{prop}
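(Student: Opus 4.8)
The plan is to read \(\mathrm{Kos}[\mathcal{L}_n\xrightarrow{\xv_n}\oplus_{i=1}^r\mathbf{a}_i\cdot\calO]\) as the Koszul complex of a section and to show that this section is invertible precisely on the part of \(\cXttsf\) that the category \(\xCatnfrr\) actually sees. Concretely, the components \(v_{n,1},\dots,v_{n,r}\) of \(\xv_n\) are the entries of the last row of the framing matrix \(\xv\in\xcVr=\Hom(\IC^r,\IC^n)\); by equivariance each \(v_{n,i}\) is a morphism \(\mathcal{L}_n\to\mathbf{a}_i\cdot\calO\), so \(\xv_n\) is a section \(s\) of the rank\nobreakdash-\(r\) bundle \(\oplus_{i=1}^r\mathbf{a}_i\cdot\mathcal{L}_n^{\vee}\), and the displayed object is the Koszul complex \((\Lambda^{\bullet}(\oplus_i\mathbf{a}_i\cdot\mathcal{L}_n^{\vee})^{\vee},\iota_s)\). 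On the open locus where \(\xv_n\neq 0\) this underlying two-periodic complex is acyclic, so the object is set-theoretically supported on \(Z=\{v_{n,1}=\cdots=v_{n,r}=0\}\subset\cXttsf\), the locus where the last row of \(\xv\) vanishes. As a consistency check, the class of this complex in \(\xKthqa(\xCatnfrr)\) equals \(\prod_{i=1}^r(\mathcal{L}_n-\mathbf{a}_i)\) up to an invertible factor; since \(\mathcal{L}_n\) represents \(\Delta_n\) and \(\mathbf{a}_i\) represents \(a_i\), the asserted contractibility is exactly the categorical shadow of the cyclotomic relation~\eqref{eq:cyrel}, consistent with Theorem~\ref{thm:Ktheo}.

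The heart of the argument is the geometric claim that \(\xv_n\) is nowhere vanishing on the critical locus, i.e. \(Z\cap\cXttsfcr=\emptyset\). First I would record the critical-point equations for \(\xW\): varying \(X\in\mfb\) and \(Y\in\mfn\) forces the moment value \(\Adv{g}Y\) to be upper triangular, \(\Adv{g}Y\in\mfb\). This is the same computation that identifies the reduced critical locus with the flag instanton data \(\FMrn\), on which \(X\) and \(\Adv{g}Y\) are simultaneously upper-triangular (indeed commuting) elements of \(\mfb\). Now on \(\cXttsf\) the triple \((X,\Adv{g}Y,\xv)\) is stable, i.e. \(\IC\langle X,\Adv{g}Y\rangle\,\xv(\IC^r)=\IC^n\). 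Since \(X\) and \(\Adv{g}Y\) are both upper triangular on \(\cXttsfcr\), the whole algebra \(\IC\langle X,\Adv{g}Y\rangle\) consists of upper-triangular matrices, so the last coordinate functional satisfies \(e_n^{*}(Mw)=M_{nn}\,e_n^{*}(w)\) for every \(M\) in the algebra and every \(w\). Hence \(e_n^{*}\) applied to \(\IC\langle X,\Adv{g}Y\rangle\,\xv(\IC^r)\) lies in the span of \(v_{n,1},\dots,v_{n,r}\); the stability equality forces that span to be nonzero, so some \(v_{n,i}\neq 0\), proving the claim.

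Granting the claim, the conclusion is immediate from the support formalism: the category \(\xCatnfrr=\xMFTaBB(\cXttsf;\xW)\) is supported on the critical locus \(\cXttsfcr\), so any object whose underlying complex of sheaves is supported away from \(\cXttsfcr\) is null-homotopic. Our object is supported on \(Z\), and \(Z\cap\cXttsfcr=\emptyset\); therefore \(\mathrm{Kos}[\mathcal{L}_n\xrightarrow{\xv_n}\oplus_{i=1}^r\mathbf{a}_i\cdot\calO]\sim 0\) in \(\xCatnfrr\). Equivalently, since \(\xv_n\) is a nowhere-vanishing section over a neighbourhood of \(\cXttsfcr\), contraction with a local inverse of \(\xv_n\) gives an explicit contracting homotopy there, which is the form in which the analogous vanishings are produced in~\cite{OblomkovRozansky16}.

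The main obstacle I expect is bookkeeping about supports rather than any single hard computation. One must be careful that ``contractible in \(\xCatnfrr\)'' is governed by the set-theoretic critical locus \(\cXttsfcr\) and not by all of \(\cXttsf\) (on which \(\xv_n\) genuinely vanishes, because before imposing the critical equations \(\Adv{g}Y\) need not be triangular), and one must confirm that the critical-point computation places \(\Adv{g}Y\) in \(\mfb\) with the orientation that singles out the last row \(\xv_n\) and the character \(\chi_n\) attached to \(\mathcal{L}_n\) rather than the first. Both points are handled by the critical-locus analysis of~\cite[\S12, Lemma~12.2]{OblomkovRozansky16}; that analysis was carried out for \(r=1\), and since the only new ingredient is the elementary upper-triangularity argument above, applied to the \(r\)-dimensional framing, it extends verbatim to general \(r\).
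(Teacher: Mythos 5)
Your proof is correct, and the mechanism at its core --- reading the object as the Koszul complex of the section $v_n$ and contracting it by a dual section where $v_n$ is invertible --- is the same one the paper uses. But your route to the non-vanishing of $v_n$ is genuinely different from, and more robust than, the paper's one-line proof, which asserts that $\phi_n\neq 0$ holds \emph{everywhere} on the stable locus ``by our definition.'' Under the definition of $\bar{\mathcal{X}}^{\mathrm{fr}}_{2,r}$ as literally displayed in~\eqref{eq:xtrfr} --- stability of the single triple $(X,\mathrm{Ad}_g Y,v)$ --- that assertion is false: for $n=2$, $r=1$, take $X=0$, $Y=E_{12}$, $g$ the permutation matrix and $v=e_1$; then $\mathrm{Ad}_gY=E_{21}$, the triple $(0,E_{21},e_1)$ is stable, yet the last row of $v$ vanishes. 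As you correctly observe, stability forces $v_n\neq 0$ only where $X$ and $\mathrm{Ad}_gY$ are \emph{simultaneously} upper triangular, which holds on the critical locus (the $\partial_X$-equations for $\bar W$ already force $\mathrm{Ad}_gY\in\mathfrak{n}$) but not on all of $\bar{\mathcal{X}}^{\mathrm{fr}}_{2,r}$; your detour through $\mathrm{Crit}(\bar W)$ together with the support property of matrix-factorization categories therefore supplies a step that the paper's proof skips. (The paper's phrase ``by our definition'' is best read against the unreduced definition, where \emph{both} triples $(X,\mu_1,v)$ and $(X,\mu_2,v)$ are required stable; with the untwisted triangular pair $X\in\mathfrak{b}$, $Y\in\mathfrak{n}$, your last-row argument gives $v_n\neq 0$ outright on the whole space, with no critical-locus analysis needed.) The trade-off is clear: the paper's reading yields an immediate global contraction but depends on which stability condition is imposed, while your argument works for the definition as stated, at the mild cost of invoking the standard fact that an object of an $\mathbf{MF}(\,\cdot\,;\bar W)$-category whose restriction to a neighborhood of $\mathrm{Crit}(\bar W)$ is contractible is null-homotopic --- a fact entirely in the spirit of the critical-locus analysis of \cite[Lemma 12.2]{OblomkovRozansky16} that the surrounding text already uses.
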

\begin{proof}
  The statement follows from the observation that the LHS of the homotopy equivalence is the
  Koszul complex for the skyscraper sheaf   at \(\phi_n=0\): by our definition
  \(\calXr_{2,fr,r}\) we have \(\phi_n\ne 0\).
\end{proof}

The K-theoretic version of the statement is the polynomial equation
\begin{equation}\label{eq:cyclo_rel}
  \prod_{i=1}^r(\mathcal{L}_n-a_i)=0,
\end{equation}
hence we prove

\begin{theorem}\label{thm:Kth}
There is a natural homomorphism of algebras
\(\xphict\colon\mfHnr
\rightarrow
\xKthqa(\xCatnfrr)
\)
which makes the square~\eqref{eq:simplesq} commutative.
%
\end{theorem}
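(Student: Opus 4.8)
The plan is to construct $\xphict$ by descending the framed affine homomorphism through the two defining quotients of the cyclotomic Hecke algebra, so that the square~\eqref{eq:simplesq} commutes by construction. Concretely, applying the $K$-theory functor $\xKth$ to the monoidal homomorphism $\xPct=\ffr\circ\xPaff$ produces a ring homomorphism $\xKth\circ\xPct\colon\mfBraffn\to\xKthqa(\xCatnfrr)$ out of the affine braid group. Since $\fct=\fchifr\circ\xqtm$, it suffices to show that this composite annihilates the two families of relations that cut out $\mfHnr$ inside the group algebra $\CC[\qpmo,\bxa](\mfBraffn)$: the quadratic Hecke relations~\eqref{eq:herel} imposed by $\xqtm$, and the cyclotomic relation~\eqref{eq:cyrel} imposed by $\fchifr$.

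For the quadratic relations I would reduce to the already-established affine case. The affine square identifies $\xKth\circ\xPaff$ with $\xpaff\circ\xqtm$, so $\xKth(\xPaff(\sigma_i))$ already satisfies $(\,\cdot-q\,)(\,\cdot+\qpmo\,)=0$ in $\xKthq(\xCatn)$. By Lemma~\ref{lem:homo} the framing pull-back $\ffr$ is a homomorphism of convolution algebras, hence induces a ring homomorphism on $K$-theory; applying it term by term carries the Hecke relation into $\xKthqa(\xCatnfrr)$ intact. Thus $\xKth\circ\xPct$ factors through $\xqtm$, yielding a homomorphism $\mfHaffn\to\xKthqa(\xCatnfrr)$.

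It remains to pass from $\mfHaffn$ to the cyclotomic quotient $\mfHnr$ along $\fchifr$, which amounts to checking that the cyclotomic relation~\eqref{eq:cyrel} lies in the kernel. Under our homomorphism the affine generator $\Dlto$ is sent to the class of the line bundle $\mathcal{L}_n$, and the identity $\prod_{i=1}^{r}(\mathcal{L}_n-a_i)=0$ of equation~\eqref{eq:cyclo_rel} --- a consequence of the stability condition, which forces the last framing coordinate to be nonzero and thereby contracts the relevant Koszul complex --- is precisely the image of~\eqref{eq:cyrel}. Hence the map descends to the desired algebra homomorphism $\xphict\colon\mfHnr\to\xKthqa(\xCatnfrr)$, defined as the unique factorization of $\xKth\circ\xPct$ through $\fct$; commutativity of~\eqref{eq:simplesq} then holds tautologically.

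The only point requiring genuine care, rather than bookkeeping, is that the two relations interact correctly after the coefficient extension by the $\bxa$-torus: the quadratic relations are inherited from the unframed affine theory through $\ffr$, while the cyclotomic relation is manufactured by the framing geometry, and one must confirm that $\xKthqa(\xCatnfrr)$ receives both simultaneously as a single well-defined quotient. Since the essential geometric input --- the vanishing~\eqref{eq:cyclo_rel} --- has already been secured, the remaining work is the organizational verification that $\xKth$ is a ring homomorphism out of the Grothendieck ring of $(\xCatnfrr,\bar{\star})$ and that $\ffr$ respects the resulting ring structures, which I expect to follow formally from Lemma~\ref{lem:homo} together with the affine case.
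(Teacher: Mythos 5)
Your proposal is correct and follows essentially the same route as the paper's own proof: verify the braid relations via the earlier papers, use Lemma~\ref{lem:homo} to transport everything through the framing pull-back $\ffr$, cite the quadratic Hecke relation in $K$-theory from the affine case, and obtain the cyclotomic relation~\eqref{eq:cyrel} from the geometric identity~\eqref{eq:cyclo_rel} forced by the stability condition. The only cosmetic difference is that you route the Hecke relation through the affine commutative square and then apply the ring map induced by $\ffr$, whereas the paper imposes the braid relations categorically in $\xCatnfrr$ first and then quotes the $K$-theoretic Hecke relation directly; the logical content is identical.
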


\begin{proof}[Proof of theorem~\ref{thm:Ktheo}]
  The relations~\eqref{eq:brrel1}-\eqref{eq:afrel2} define the affine braid group, and
  it is shown in \cite{OblomkovRozansky16} and \cite{OblomkovRozansky17} that they
  are satisfied by the corresponding matrix factorizations of \(\xCatn\). Hence by lemma~\ref{lem:homo} these relations also hold for the corresponding matrix factorizations of
  $\xCatnfrr$.
  It is shown in \cite[section 14.2]{OblomkovRozansky16} that the Hecke relation~\eqref{eq:herel} is satisfied in K-theory. Finally, the cyclotomic relation~\eqref{eq:cyrel}
  is exactly the relation (\ref{eq:cyclo_rel}).

\end{proof}

\section{Functors \(\tilde{\imath}_*\) and \(\tilde{\imath}^*\)}
\label{sec:funct-tild-tild}
In this section we define the categories \(\DTFMrn \), \(D^{\hat{T}}(\mathrm{FM}_{r,n}^{\mathrm{free}})\) and the functors \(\tilde{\imath}_*\) and \(\tilde{\imath}^*\) relating them to $\xCatnfrr$. We show that these functors satisfy the projection formula and
\(\tilde{\imath}_*\) is monoidal.

\subsection{Quotient spaces and categories}
\label{sec:quot-spac-categ}

The space  \(\tFMfrrn\) is an open subset of the smooth space.
Moreover, the unipotent group \(U\subset B\) acts freely on this subset:
\begin{prop}
  The group \(U\) acts freely on \(\tFMfrrn\).
\end{prop}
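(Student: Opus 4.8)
The plan is to make the $B$-action on $\tFMfrrn$ explicit and then to show directly that the stabilizer of every point is trivial, the only input being the stability condition~\eqref{eq:stab}. Recall that $\tFMfrrn$ is the slice $\{g=1\}$ inside $\cXttsf$, and that the diagonal copy $B\hookrightarrow B\times B$ is precisely the subgroup preserving this slice: on $\cXtt=\mfb\times\mG\times\mfn$ a pair $(b_1,b_2)$ sends $(X,g,Y)$ to $(\Adv{b_1}X,\,b_1gb_2^{-1},\,\Adv{b_2}Y)$, so $g=1$ is preserved exactly when $b_1=b_2$. Hence the induced action of $b\in B$ on $\tFMfrrn$ is simultaneous conjugation together with the natural action on the framing, $b\cdot(X,Y,\xv)=(\Adv{b}X,\Adv{b}Y,b\xv)$. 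This preserves $\mfb\times\mfb\times\xcVr$ because $B$ normalizes $\mfb$, and it preserves stability because $\IC\langle\Adv{b}X,\Adv{b}Y\rangle(b\xv)(\CC^{\xr})=b\cdot\bigl(\IC\langle X,Y\rangle\,\xv(\CC^{\xr})\bigr)=b\cdot\CC^n=\CC^n$.

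First I would suppose that some $u\in U$ fixes a point $(X,Y,\xv)\in\tFMfrrn$, i.e.\ $\Adv{u}X=X$, $\Adv{u}Y=Y$ and $u\xv=\xv$. The first two identities say that $u$ commutes with both $X$ and $Y$, while the third says that $u$ fixes every vector in the image $\xv(\CC^{\xr})$ pointwise. I would then introduce the fixed subspace $W=\{w\in\CC^n\,|\,uw=w\}$. Since $u$ commutes with $X$, for $w\in W$ we have $u(Xw)=X(uw)=Xw$, so $Xw\in W$; likewise $Yw\in W$. Thus $W$ is invariant under both $X$ and $Y$, and by the previous sentence it contains $\xv(\CC^{\xr})$. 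Consequently $W$ contains the smallest $X,Y$-invariant subspace containing the image of the framing, namely $\IC\langle X,Y\rangle\,\xv(\CC^{\xr})$, which is all of $\CC^n$ by stability~\eqref{eq:stab}. Therefore $W=\CC^n$, so $u$ acts as the identity operator on $\CC^n$ and hence $u=1$. As the point was arbitrary, every stabilizer is trivial and $U$ acts freely.

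There is no serious obstacle here: once the action is correctly identified, the conclusion is an immediate application of stability, the key observation being that the $1$-eigenspace of a stabilizing element is automatically an $X,Y$-invariant subspace through which $\xv$ factors. The only point deserving a line of care is the normalization of the action---checking that the diagonal $B$ preserving $\{g=1\}$ acts by simultaneous conjugation on $(X,Y)$ and in the standard way on $\xv$---after which the fixed-space argument closes at once. I would also note that the identical computation applies verbatim to any $b\in B$ (the argument never used that $b$ is unipotent), so in fact the whole Borel acts with trivial stabilizers on the stable locus; the proposition isolates the unipotent part, which is what is needed to conclude that $\tFMfrrn$, being an open subset of a smooth space with a free $U$-action, descends to a well-behaved quotient $\FMfrrn=\tFMfrrn/B$.
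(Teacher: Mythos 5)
Your proof is correct and is essentially the paper's argument: the paper takes a nonzero strictly upper-triangular $Z\in\frn$ (in effect $Z=u-1$) commuting with $X,Y$ and killing $\xv$, and derives the contradiction from $\ker Z$ being a proper $X,Y$-invariant subspace containing $\xv(\CC^{\xr})$ --- which is exactly your fixed subspace $W$ in contrapositive form. Your extra verifications (the form of the diagonal $B$-action on the slice $\{g=1\}$ and the remark that all of $B$ acts with trivial stabilizers) are sound but not a different method.
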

\begin{proof}
If a point \((X,Y,\xv)\) has a non-trivial stabilizer then there is a non-zero strictly upper-triangular matrix  \(\mZ\in \frn\) such that
\[[X,\mZ]=[Y,\mZ]=0,\quad \mZ\xv=0.\]
Its kernel \(\ker \mZ\subset \IC^n\) is a proper subspace which contains the image of \(\xv\) and is preserved by \(X,Y\). Hence
\(\CC\langle X,Y\rangle \xv(V)\subset \ker \mZ\), which contradicts the stability condition.
\end{proof}

Thus  the quotient \(\tFMfrrn/U\) is free. In the case \(r=1\)  the  smoothness was shown by an explicit construction
of the smooth charts in \cite{OblomkovRozansky17a}. These charts have analogs for general \(r\) and we discuss them in a forthcoming paper.

The maximal torus \(T\subset B\) acts on the quotient \(\tFMfrrn/U\).
We define \(\chi\colon T\rightarrow \CC^*\) to be the determinant of a (diagonal) matrix.
The trivial bundle on the quotient \(\tFMfrrn/U\) with the equivariant
structure defined by \(\chi\) provides a polarization for the quotient and we consider
the GIT quotient  \(\bigr(\tFMfrrn/U\bigl)\dqtch T\). This quotient is smooth in view of the following proposition:
\begin{prop}
  Every point of \(\tFMfrrn/U\) is GIT stable.
\end{prop}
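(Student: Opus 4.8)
The plan is to identify the GIT-stable locus with the cyclic locus directly, via King's numerical criterion, and then to upgrade semistability to stability using the freeness of the torus action. Throughout write $\bbA = \mfb\times\mfb\times\xcVr$, an affine space on which $B$ acts linearly (by conjugation on the two copies of $\mfb$ and by post-composition $\xv\mapsto b\xv$ on $\xcVr$), and note that $\chi=\det$ is trivial on $U$, so that a $\det$-semiinvariant for $B$ is automatically $U$-invariant and descends to a $\chi$-semiinvariant for the residual $T$-action on $\tFMfrrn/U$. Thus the ring computing $(\tFMfrrn/U)\dqtch T$ is the ring of $\det$-semiinvariants of the $B$-action on the cyclic locus $\tFMfrrn\subset\bbA$, and it suffices to prove two things: that $T$ acts freely on $\tFMfrrn/U$, and that every point is $\chi$-semistable.

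First I would record freeness. If $g\in B$ stabilizes $(X,Y,\xv)\in\tFMfrrn$, then $g$ commutes with $X$ and $Y$ and fixes $\Image\xv$ pointwise, hence fixes every vector $w(X,Y)\,\xv(e_k)$ for all words $w$ in $X,Y$; by the stability condition $\IC\langle X,Y\rangle\,\xv(\IC^r)=\IC^n$ these vectors span $\IC^n$, so $g=\id$. Therefore the $B$-stabilizer of any cyclic point is trivial, which both reproves the freeness of $U$ and shows that $T$ acts freely on the quotient: if $t\in T$ fixes the class of $x$, then $u^{-1}t$ stabilizes $x$ for some $u\in U$, forcing $u^{-1}t=\id$ and hence $t\in T\cap U=\{\id\}$.

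Next I would produce enough semiinvariants to witness semistability of every point. For each choice $\mathcal{S}$ of $n$ vectors of the form $w(X,Y)\,\xv(e_k)$, with $w$ a word in $X,Y$, set $P_{\mathcal{S}}(X,Y,\xv)=\det$ of the $n\times n$ matrix whose columns are these vectors. Under $b\in B$ one has $w(X,Y)\,\xv(e_k)\mapsto b\,w(X,Y)\,\xv(e_k)$, so $P_{\mathcal{S}}(b\cdot x)=\det(b)\,P_{\mathcal{S}}(x)$; thus $P_{\mathcal{S}}$ is a $\det$-semiinvariant, is $U$-invariant, and descends to a section of the $\chi$-linearized trivial bundle on $\tFMfrrn/U$. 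Cyclicity of a point $x$ means precisely that some $n$ of the vectors $w(X,Y)\,\xv(e_k)$ form a basis of $\IC^n$ (finitely many words suffice, since they span the subalgebra $\IC\langle X,Y\rangle\subseteq\End(\IC^n)$, of dimension at most $n^2$), i.e. $P_{\mathcal{S}}(x)\neq0$ for some $\mathcal{S}$. Hence every point of $\tFMfrrn/U$ is $\chi$-semistable, and the semistable locus is all of $\tFMfrrn/U$.

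Finally I would deduce stability. Since $T$ acts freely, every orbit in the semistable locus has dimension $\dim T=n$; as the boundary of an orbit is a union of strictly lower-dimensional orbits, no orbit can meet the closure of another, so every orbit is closed in $\tFMfrrn/U$, which is its own semistable locus. A semistable point with closed orbit and finite (here trivial) stabilizer is stable, so every point is GIT-stable, and the geometric quotient of the smooth space $\tFMfrrn/U$ by the free $T$-action is then smooth. The one point that needs care, and which I regard as the main obstacle, is exactly the passage from semistability to stability: it rests on the freeness of the $T$-action---equivalently on the triviality of automorphisms of a cyclic module---to force all orbits to share the same dimension and hence be closed. Without this input, King's criterion alone would only separate stable from strictly semistable points, and one would instead have to rule out strictly semistable points by a direct Hilbert--Mumford weight computation over $(X,Y)$-invariant flags, keeping track of the $U$-dressing coming from the quotient.
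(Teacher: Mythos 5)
Your proof is correct, but it runs in the opposite direction from the paper's. The paper's proof is exactly the ``direct Hilbert--Mumford weight computation'' that you relegate to a fallback in your closing sentence: assuming a point is not stable, it takes a destabilizing one-parameter subgroup $\eta(t)=(t^{k_1},\dots,t^{k_n})$ of $T$ with $\sum_i k_i<0$ whose limit exists, splits $\IC^n=V_+\oplus V_-$ by the signs of the weights, observes that existence of the limit forces $\Image\xv\subset V_+$ and that $X,Y$ preserve $V_+$, so $\IC\langle X,Y\rangle\,\xv(\IC^r)\subseteq V_+$ is proper, contradicting cyclicity --- a three-line argument that yields stability in one step with no appeal to King's criterion. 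You instead exhibit the explicit $\det$-semiinvariants $P_{\mathcal{S}}$ (the ADHM/quiver-style determinants of words applied to the framing), note that their nonvanishing is precisely cyclicity, and then upgrade semistability to stability via triviality of $B$-stabilizers at cyclic points, which makes all $T$-orbits $n$-dimensional and hence closed. Your route buys more: it reproves the paper's freeness proposition en passant, identifies a generating family of sections of the polarizing bundle, and shows the cyclic locus is covered by the affine principal opens $\{P_{\mathcal{S}}\neq 0\}$ --- which is exactly what makes the King-type implication ``semistable $+$ closed orbit $+$ trivial stabilizer $\Rightarrow$ stable'' legitimate on this quasi-affine quotient, a point worth stating explicitly since that implication is usually formulated for affine GIT (the existence and behavior of the $U$-quotient itself you may take as given, as the paper does via its chart construction). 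The cost is length and reliance on that general machinery; the paper's 1-PS computation is shorter, self-contained, and needs no semistable-to-stable bootstrap.
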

\begin{proof}
If a point \((X,Y,\xv)\) is not stable then there is
  a one parameter subgroup $\eta\colon\IC^*\rightarrow T$, $t\mapsto\eta(t) = (t^{\xik_1},\dots,t^{\xik_n})$,
   such that \(\sum_i \xik_i<0\) and the limit
  \(\lim_{t\to 0}\eta(t)(X,Y,\xv)\) exists. Let us denote by \(V_-\) the span of the vectors \(e_j\in\IC^n\) with \(\xik_j<0\) and by
  \(V_+\) is the span of vectors \(e_j\) with \(\xik_j\ge 0\).
  Then the image of \(\xv\) is a subspace of \(V_+\) and
  \(X,Y\in \End(V_+)\oplus\End(V_-)\) hence \(\CC\langle X,Y\rangle \xv(V)\subset V_+\).  Because of the assumption on $\eta$, the subspace
  \(V_+\) is proper contradicting the stability condition.
\end{proof}

Thus the space $
\FMfrrn:=\bigl(\tFMfrrn/U\bigr)
/\mnsp/_\chi T$ is smooth and we denote by \(\xDTFMfr\)
the derived category of the two-periodic  complexes of \(\hat{T}\)-equivariant coherent sheaves on
\(\FMfrrn\). There are two homology functors \(\xHmevod\) which assign to a complex
\((C_\bullet,d_\bullet)\in \xDTFMfr\)  the sheaves on \(\FMfrrn\):
\(\xHmev(C_\bullet,d_\bullet)=\ker \xdev/\xIm \xdodd\) and
\(\xHmod(C_\bullet,d_\bullet)=\ker \xdodd/\xIm \xdev\).

The defining ideal \(I_{r,n}\subset \CC\bigl[\tFMfrrn\bigr]\) for the subvariety
\(\tFMrn \)
is \(B\)-equivariant, hence it acts on
objects of
\(\xDTFMfr\).
Define the
category \(\xDTFM\)
as a subcategory of \(\xDTFMfr\) consisting of complexes \((C_\bullet,d_\bullet)\)
such that the action of the elements of \(I_{r,n}\) is homotopic to zero. In particular, for the objects of
\(\xDTFM\) the homology
\(\xHmevod(C_\bullet,d_\bullet )\) is scheme-theoretically supported on
\(\FMrn\subset\FMfrrn\).

Analogous
construction in K-theoretic setting was used in the book
\cite{ChrisGinzburg} and in paper \cite{BezrukavnikovRiche2012} to study
coherent sheaves on the Steinberg variety. The category \(\xDTFM\) has a natural monoidal structure
induced by the tensor product in the category \(\xDTFMfr\).

\subsection{Trace  functor}
\label{sec:trace-co-trace}
Denote
\[\cXttsfB = \cXttsf \cap \frb\times B\times\frn\times \xcVr.\]
There is a natural map
\(p\colon \cXttsfB\rightarrow \tFMfrrn\)
 which is defined by
\[p(X,g,Y,\phi)=(\Ad_{g}(X),Y,\phi).\]
The projection \(p\) induces the functor \(p^*\colon \yMFBlzv{\tFMfrrn}\rightarrow\yMFBtlzv{\cXttsfB}. \)

On the other hand the embedding \(i_B\colon\cXttsfB\rightarrow\cXttsf\) satisfies conditions of the construction of the
equivariant push forward from \cite[section 3.7]{OblomkovRozansky16}. Hence there is a well-defined functor:
\[i_{B*}\colon \yMFBtlzv{\cXttsfB}\rightarrow\xMFBtlv{\cXttsf,\xW}.\]

Since the space $\FMfrrn$ is a quotient of $\tFMfrrn$ by the action of $B$, there is a corresponding functor
\[\und{\mathrm{qt}}\colon
\yMFBlzv{\tFMfrrn}\rightarrow \xDTFMfr
\]
Finally, since \(\und{\mathrm{qt}}\) is an isomorphism,  we can define a
functor \(\tilde{\imath}^*:=i_{B*}\circ p^*:\)
\[\tilde{\imath}^*\colon \DhT(\FMfrrn)\rightarrow\xMFBtlv{\cXttsf,\xW}.\]

Similar to the discussion in the previous section we define a subcategory
\(\DTsFMrn \) of the category \(\DTsFMfrn\)
consisting of complexes with a homotopy connecting action of \(I_{r,n}\) with \(0\).

Let us also recall the construction of the functor \(\tilde{\imath}_*\) from \cite{OblomkovRozansky16}. The short outline of the
construction for \(r=1\) is available in \cite[section 1.3]{OblomkovRozansky17}. In the special case of \(r=1\)
the functor produces an isotopy invariant of the closure of the braid \cite{OblomkovRozansky16}.
Let \(j_e\) be an embedding
\[j_e\colon \tFMfrrn\rightarrow \cXttsf,\quad j_e(X,Y,\phi)=(X,e,Y,\phi).\]
Then the functor \(\tilde{\imath}_*\) is defined to be a pull-back \(j_e^*\). The functor lands in the category \(\DTsFMrn\):
\[\tilde{\imath}_*\colon\xMFBtlv{\cXttsf,\xW}\rightarrow \DTsFMrn.\]

\subsection{Proof of theorem~\ref{thm:two-functors}}
\label{sec:proof-theor-refthm:t}

The variety \(\tFMfrrn\subset\cXttsf\) is the zero locus of the ideal \(I_\Delta\subset\CC[\frb\times G\times\frn]\) with the generators
\(g_{ij}\), \(i>j\). The generators form a regular sequence and the Koszul complex \(\mathrm{K}^{\Wr}(I_\Delta)\in \MF_{B^2}(\calXr_{n,r})\)
is the unit \(\mathds{1}_n\) in the convolution algebra \cite[section 7.1]{OblomkovRozansky16}. Thus for any pair of vector bundles \(E,F\) on $\FMfrrn$
we have
\[\tilde{\imath}^*(E)\bar{\star}\tilde{\imath}^*(F)=\tilde{\imath}^*(E\otimes F)\]
because \(\mathds{1}_n\bar{\star}\mathds{1}_n=\mathds{1}_n\). Since general sheaf on $\FMfrrn$ have a finite projective resolution, the
previous observation implies that the functor \(\tilde{\imath}^*\) is monoidal.

The last statement of the theorem follows immediately from the construction of the functors \(\tilde{\imath}_*,\tilde{\imath}^*\).

\section{Geometry of JW projectors}
\label{sec:jw-projectors}

In this section we show the injectivity of the homomorphism \(\phi\). As a by-product of our proof we provide a geometric construction for the JW
projectors in the  Hecke algebra. First we recall algebraic properties of the JW projectors.

\subsection{Inductive construction of the quasi-projectors}
\label{sec:induct-constr-quasi}
The \xJM\ subalgebra of the (ordinary) Hecke algebra $\yJMn\subset\mfHn$ has a basis of idempotents, whose elements are labelled by standard Young tableaux.
A tableaux \(\yT\) of size \(n\) is a Young diagram
with squares labeled by elements of the set \(\{1,\dots,n\}\),
the labels increasing along the rows and columns. In particular, for a standard Young tableau \(\yT\in\mathrm\sytn\)  the union of squares with
labels smaller or equal \(k\) is a Young diagram of size \(k\), and we denote this (sub-)diagram as \(D_k(\yT)\).

\def\square{ \Box }

The outer corners of a Young diagram \(\lambda\) is the set of squares \(C(\lambda)\) such that if
\(\square\in \Cout(\lambda)\) then the union \(\lambda\cup \square \) is a Young diagram. The content
of a square \(\square\) is the difference of the row number and the column number:
\[c(\square)=\mathrm{row}(\square)-\mathrm{col}(\square).\]

The algebra \(\JM_n\) is generated by the JM elements \(\delta_i\), \(i=1,\dots,n-1\).
To a tableau \(\yT\) we attach the following polynomial of JM elements:
\begin{equation}\label{eq:JMproj}
  Q_{\yT}=\prod_{k=1}^{n-1}\frac{\prod_{\square\in \Cout(D_k)}(\delta_{n-k}-q^{2c(\square)})}{(\delta_{n-k}-q^{2c(\square_k)})},
  \end{equation}
where \(\square_k\) is the square with the label \(k\) and \(D_k=D_k(\tau)\).

\begin{prop}\cite{AistonMorton98}
  The elements \(Q_{\yT}\), \(\yT\in \sytn\) form a \(\CC(q)\) basis of \(\yJMn\).
  The elements \(Q_T\) are quasi-idempotent and
  \[\delta_{n-k}\cdot Q_{\yT}=q^{2c(\square_k)}Q_{\yT},\]
  where \(\square_k\) is the box the label \(k\).
\end{prop}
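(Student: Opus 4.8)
The plan is to reduce everything to the joint spectral decomposition of the commuting family $\delta_1,\dots,\delta_{n-1}$ and to identify each $Q_{\yT}$ with a primitive idempotent of $\yJMn$ up to a nonzero scalar. The one external input I would use is the \emph{content--eigenvalue property} of the \xJM\ elements: on the seminormal basis vector $v_{\yT}$ attached to a standard tableau $\yT\in\sytn$ one has $\delta_{n-k}\,v_{\yT}=q^{2c(\square_k)}v_{\yT}$, where $\square_k$ is the box of $\yT$ labelled $k$. Granting this, the $\delta_i$ act semisimply, and the diagonal matrix units $\delta_{i;j}$ recalled above are precisely the joint spectral projections $e_{\yT}$ (rank-one projections onto the lines $\CC(q)\,v_{\yT}$); being orthogonal primitive idempotents they form a $\CC(q)$-basis of $\yJMn$ indexed by $\sytn$. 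It then suffices to prove $Q_{\yT}=C_{\yT}\,e_{\yT}$ for a nonzero $C_{\yT}\in\CC(q)$.

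Since $Q_{\yT}$ is a polynomial in the commuting $\delta_i$, it acts diagonally on every $v_{\yS}$, by the scalar obtained from~\eqref{eq:JMproj} upon substituting $\delta_{n-k}\mapsto q^{2c(\square_k(\yS))}$. Structurally each factor of~\eqref{eq:JMproj} is, up to the indicated normalization, the product $\prod_{\square}(\delta_{n-k}-q^{2c(\square)})$ over the addable boxes $\square$ available to the box labelled $k$ other than the one, $\square_k$, actually occupied in $\yT$. The heart of the argument is the elementary combinatorial fact that distinct addable boxes of a Young diagram lie on distinct diagonals, hence have distinct contents; since $q$ is transcendental (we work over $\CC(q)$), the values $q^{2c(\square)}$ are then pairwise distinct. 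Two consequences follow. First, evaluating at $v_{\yT}$ no factor vanishes, so the scalar $C_{\yT}$ by which $Q_{\yT}$ acts on $v_{\yT}$ is nonzero. Second, if $\yS\neq\yT$, let $m$ be the least index with $D_{m}(\yS)\neq D_{m}(\yT)$; then $D_{m-1}(\yS)=D_{m-1}(\yT)$ and $\square_m(\yS)\neq\square_m(\yT)$ are two distinct addable boxes of the common diagram. Hence $q^{2c(\square_m(\yS))}$ occurs among the roots of the $m$-th factor of $Q_{\yT}$, and substituting the $v_{\yS}$-eigenvalues makes that factor vanish. Therefore $Q_{\yT}\,v_{\yS}=0$ for $\yS\neq\yT$, which together with the first point gives $Q_{\yT}=C_{\yT}\,e_{\yT}$.

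The three assertions now follow at once. Because $\{e_{\yT}\}_{\yT\in\sytn}$ is a basis of $\yJMn$ and each $C_{\yT}\neq 0$, the family $\{Q_{\yT}\}$ is again a basis. Quasi-idempotency is $Q_{\yT}^2=C_{\yT}^2 e_{\yT}^2=C_{\yT}^2 e_{\yT}=C_{\yT}\,Q_{\yT}$. Finally $\delta_{n-k}\,e_{\yT}=q^{2c(\square_k)}e_{\yT}$ by the content--eigenvalue property, whence $\delta_{n-k}\,Q_{\yT}=q^{2c(\square_k)}Q_{\yT}$.

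I would expect the genuine obstacle to be the content--eigenvalue input itself: establishing that the \xBL\ / \xJM\ elements act diagonally on the seminormal basis with eigenvalues $q^{2c(\square_k)}$, \emph{with the reversed indexing $\delta_{n-k}\leftrightarrow$ label $k$} dictated by the right-to-left strand convention of~\eqref{BraidMurphy}. This is the part that requires the full seminormal representation theory of $\mfHn$ (via the multiplicity-free branching chain $\mfHv{1}\subset\cdots\subset\mfHn$, or the skein-theoretic arguments of \cite{AistonMorton98}); once it is in place, the remaining steps are the convention-robust Lagrange-interpolation and separation-of-contents computations sketched above.
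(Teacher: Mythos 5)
Your overall architecture is the right one, and it is essentially the standard algebraic proof of this statement: diagonalize the commuting family $\delta_1,\dots,\delta_{n-1}$ on the seminormal basis, use the fact that addable corners of a fixed diagram lie on distinct diagonals, and identify each $Q_{\yT}$ by Lagrange interpolation with a nonzero multiple of the joint spectral projection $e_{\yT}$. Note that the paper itself offers no proof here --- the proposition is imported from \cite{AistonMorton98}, where the construction is skein-theoretic --- and you correctly isolate the content--eigenvalue property as the one serious external input. The gap lies in how you resolve the indexing of \eqref{eq:JMproj}. As printed, that formula is internally inconsistent: the denominator $(\delta_{n-k}-q^{2c(\square_k)})$ cannot cancel a numerator factor, because $\square_k\in D_k$ and hence $\square_k\notin\Cout(D_k)$. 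You repair this by shifting the numerator down, reading the $k$-th factor as interpolation over $\Cout(D_{k-1})\setminus\{\square_k\}$ (``the addable boxes available to the box labelled $k$''), paired with the eigenvalue rule $\delta_{n-k}v_{\yT}=q^{2c(\square_k)}v_{\yT}$. This pair of conventions is mutually consistent but wrong, and it breaks the proof.

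Concretely, under your reading neither $Q_{\yT}$ nor the joint spectrum of $(\delta_1,\dots,\delta_{n-1})$ sees the box labelled $n$: your factors involve only $D_0,\dots,D_{n-2}$ and $\square_1,\dots,\square_{n-1}$. Your separation step invokes ``the $m$-th factor'' for the least $m$ with $D_m(\yS)\neq D_m(\yT)$, but factors exist only for $m\leq n-1$, while $m=n$ occurs for every $\yT$: any nonempty diagram has at least two addable corners, so some $\yS\neq\yT$ has $\yS'=\yT'$. For such $\yS$ you get $Q_{\yT}v_{\yS}=C_{\yT}v_{\yS}\neq 0$, so $Q_{\yT}$ is not proportional to $e_{\yT}$, and since your $Q_{\yT}$ depends only on $\yT'$, the family is not even linearly independent; at $n=2$ your reading literally gives $Q_{\yT}=1$ for both tableaux. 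Your eigenvalue pairing also contradicts the Hecke relation: it forces $\delta_{n-1}=\sigma_{n-1}^2$ to act by $q^{2c(\square_1)}=1$ on every $v_{\yT}$, impossible since by \eqref{eq:herel} its eigenvalues are $q^{\pm 2}$. The consistent resolution --- the one forced by the inductive identity $Q_{\yT}=\mathfrak{ind}(Q_{\yT'})\cdot\prod_{\square\in\Cout(\lmp)\setminus\yT}(\delta_1-q^{2c(\square)})$ that the paper uses in the proof of Theorem~\ref{thm:JMgeo} --- is the opposite shift: the factor attached to $\delta_{n-k}$ interpolates over $\Cout(D_k)$, the positions available to the box labelled $k+1$, with the denominator cancelling the $\square_{k+1}$-factor, and correspondingly $\delta_{n-k}v_{\yT}=q^{2c(\square_{k+1})}v_{\yT}$. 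With this shift your own interpolation-and-separation argument goes through verbatim: the least differing index $m\in\{2,\dots,n\}$ is handled by the factor $k=m-1$, the nonvanishing of $C_{\yT}$ and quasi-idempotency follow exactly as you wrote, and the displayed eigenvalue equation of the proposition is to be read with the same shift.
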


The proposition also implies that the elements
\(Q_{\yT}\) are proportional to the diagonal matrix elements in the decomposition:
\begin{equation}\label{eq:dec}
  \mfHn=\bigoplus_{i\in I}\GL(V_i),
  \end{equation}
where the sum is over the set $I$ of irreducible representations of \(S_n\).

\subsection{Sheaves on the flag Hilbert scheme}
\label{sec:sheaves-flag-hilbert}
In this section we define and discuss the category \(\DTsFMrn \) in
the simplest case \(r=1\), the case of general \(r\) is similar and will
be discussed elsewhere.

We use notation \(\FHilbfrn \) for \(\FMfron \) to make our notation close to both
\cite{OblomkovRozansky16} and \cite{GorskyNegutRasmussen16}.
It is shown in \cite[section 13.1]{OblomkovRozansky16} that the space \(\FHilbfrn\) has
a \(B\)-equvariant affine cover. The details of the cover are discussed in  \cite{OblomkovRozansky17a} where
we introduce a natural labeling set \(\xNSn\) for the affine charts:
an affine space \(\mathbb{A}_S\)  corresponding to a label \(\mathbb{A}_{S}, S\in \xNSn\).

The standard Young tableaux form a subset \(\sytn\subset\xNSn\),
and the union
\[\mathbb{A}_{\sytn}:=\bigcup_{\yT\in \sytn\subset\xNSn}\mathbb{A}_{\yT},\]
is an neighborhood of \(\FHilbn\) inside \(\FHilbfrn\), see \cite[section 6]{OblomkovRozansky17a}.
 The key property of
\(\mathbb{A}_{\sytn}\) is that the $\Csq$-invariant locus \(\bigl(\mathbb{A}_{\sytn}\bigr)^{\Csq}\) is zero-dimesional.

The \(\Tqt\)-fixed locus inside \(\FHilbn\) is equal to the \(\Csq\)-fixed locus and it is zero-dimesional.
A point of of \(\FHilbnC\) is a chain of monomial ideals:
\[I_1\supset I_2\supset\dots\supset I_n,\]
with \(\dim(\CC[x,y]/I_n)=n\). The quotient \(I_{k}/I_{k+1}\) is spanned by a monomial \(x^{i_k}y^{j_k}\).
The points \((i_k,j_k)\), $1\leq k \leq n$ determine
 a standard Young
tableaux. Thus the fixed points \(\FHilbnC\) are naturally labeled by the standard Young tableaux and
we denote by \(I_{\bullet,\yT}\), \(\yT\in \sytn\) the corresponding chain of ideals.

Each affine space \(\mathbb{A}_{\yT}\), \(\yT\in SYT_n\) has the chain \(I_{\bullet,\yT}\) as its \(\Csq\)-fixed center.
The weight of the $\Csq$-action on the stalk of \(\mathcal{L}_i\) is \(2c(\square_k)\) where \(\square_k\in
\yT\)
is the square labelled by $k$ in the tableaux \(\yT\). The variety \(\mathbb{A}_{\yT}\) is an affine space,
hence there is a relaiton in \(K\bigl(\FHilbfrn\bigr) \):
\[[\mathcal{L}_i\otimes \mathcal{O}_{\mathbb{A}_T}]=q^{2c(\square_k)}\cdot [\mathbb{A}_T].\]

\subsection{Induction functors}
\label{sec:induction-functors}

Before we proceed to the proof of the theorem~\ref{thm:Kth} let us recall the induction functor
from \cite{OblomkovRozansky16}. Denote for brevity $\Gn = \GL(n)$.
The Lie algebra of the standard parabolic subgroup \(P_k\subset \Gn\) is generated by the Borel subalgebra \(\frb\) and by the under-diagonal elements \(E_{i+1,i}\), \(i\ne k\).
Define  \(\calXr_2(P_k):=\frb\times P_k \times \frn\) and denote
\(\bcXGn=\calXr_2\) in order to make the $n$-dependence explicit. There is a natural embedding \(\bar{i}_k:\calXr_2(P_k)\rightarrow\bcXGn\) and
a natural projection \(\bar{p}_k:\calXr_2(P_k)\rightarrow\calXr_2(G_k)\times\calXr_2(G_{n-k})\). The embedding \(\bar{i}_k\)
satisfies
the conditions for existence of the push-forward functor for matrix factorizations with the potential $\xW$
and we define the induction functor:
\[\overline{\ind}_k:=\bar{i}_{k*}\circ \bar{p}_k^*: \MF_{B_k^2}(\calXr_2(G_k),\Wr)\times\MF_{B_{n-k}^2}(\calXr_2(G_{n-k}),\Wr)\rightarrow\MF_{B_n^2}(\calXr_2(G_{n}),\Wr)\]

The stability condition~\eqref{eq:stab} defines an open subspace $\bcXfrtPk\subset \frb\times P_k \times \frn\times
\IC^n$.
A natural projection map \(\bar{p}_k\colon\bcXfrtPk\rightarrow\calXr_2(G_k)\times\bcXfrtGv{n-k}\) and
an embedding \(\bar{i}_k\colon \calXr_{2}^{\xfr}(P_k)\rightarrow\calXr_{2}^{\xfr}(G_{n})\)  define the induction functor:
\[\overline{\ind}_k:=\bar{i}_{k*}\circ \bar{p}_k^*\colon \MF_{B_k^2}\bigl(\calXr_2(G_k),\Wr\bigr)\times\MF_{B_{n-k}^2}\bigl(\bcXfrtGv{n-k},\Wr\bigr)\rightarrow
\MF_{B_n^2}\bigl(\bcXfrtGv{n},\Wr\bigr)\]

It is shown in  \cite[proposition 6.2]{OblomkovRozansky16} that the functor \(\overline{\ind}_k\) is the homomorphism of the convolution algebras:
\[\overline{\ind}_k(\mathcal{F}_1\boxtimes\mathcal{F}_2)\bar{\star} \overline{\ind}_k(\mathcal{G}_1\boxtimes\mathcal{G}_2)=
  \overline{\ind}_k(\mathcal{F}_1\bar{\star}\mathcal{G}_2\boxtimes\mathcal{F}_2\bar{\star}\mathcal{G}_2).\]

\subsection{Proof of theorem~\ref{thm:JMgeo}}
\label{sec:proof-theor-refthm:g}
We proceed by induction on \(n\).


Consider a projection  \[\pi\colon\FHilbfrn\rightarrow\FHilbfrno\] which acts by removing the top rows of the matrices
$X,Y\in\mfb$ and the top entry of the vector $\xv\in\IC^n$. It induces a pull-back functor $\pi^*\colon \DhT(\FHilbfrno)\rightarrow\DhT(\FHilbfrno)$. We define an associated functor \[\pibul\colon\DhT(\FHilbno)\rightarrow\DhT(\FHilbn).\]
A pull-back $\pi^*(C)$ of a complex $C\in\DhT(\FHilbno)$ is a complex of sheaves on $\FHilbfrn$ with the homological support at $\pi^{-1}(\FHilbno)$. The flag Hilbert scheme $\FHilbn$ is a subvariety of $\pi^{-1}(\FHilbno)$ defined by zeroes of the functions
%
\[f_i(X,Y):=[X,Y]_{1,i},\quad i=2,\dots,n,\]
where \(X\in\frb,Y\in \frn\) are the coordinates on \(\FHilbfrn\). Thus we define
\[\pi^\bullet(C):=\pi^*(C)\otimes \left(\bigotimes_{i=2}^n\mathrm{K}(f_i)\right),\]
where \(\mathrm{K}\) denotes the Koszul complex.

The functor \(\tilde{\imath}_*\circ\tilde{\imath}^*\) intertwines \(\pi^*\) and \(\pi^\bullet\):
\[\pi^\bullet\circ  \tilde{\imath}_*\circ\tilde{\imath}^*=\tilde{\imath}_*\circ\tilde{\imath}^*\circ\pi^*. \]
Indeed, it is enough to check it for the structure sheaf \(\mathcal{O}_{\FHilbfrn}\). Then the result follows from the observation that
\[\tilde{\imath}_*\circ\tilde{\imath}^*(\mathcal{O}_{\FHilbfrn})=\bigotimes_{2\leq i\leq j\leq n}\mathrm{K}(f_{ij}),\]
where \(\mathrm{K}\) is the Koszul complex
and
\(f_{ij}=[X,Y]_{ij}\).

Our strategy is to compare the pull-backs along the projection \(\pi\)
with the inclusion map \(\mathfrak{ind}\colon \Br_{n-1}\rightarrow \Br_n\) where
\(\Br_{n-1}\) is the subgroup of \(\Br_n\) generated by \(\sigma_i\), \(i=2,\dots,n-1\). The key property of the \(\overline{\ind}_k\) is that
\[\overline{\ind}_1(\mathds{1}_1\boxtimes \Phi_{n-1}(\beta))=\Phi_n(\mathfrak{ind}(\beta)).\]

Another formula required for the proof is similar to the one used in the proof of the Markov move \cite{OblomkovRozansky16}:
\begin{equation}\label{eq:pi_bul}
  \tilde{\imath}_*(\overline{\ind}_1(\mathds{1}_1\otimes C))=\pi^\bullet(\tilde{\imath}_*(C)).
\end{equation}
In order to prove it we recall the  push-forward from \cite[section 3.2, section 3.7]{OblomkovRozansky16}.
We are interested in the details of the push-forward \(\bar{i}_{1*}\) that appears in the definition of the functor \(\overline{\ind}_1\).

The subspace \(\calXr_2(P_1)\subset \calXr_2(G_n)\) is defined by the equation \(g_{i1}=0\), \(i=2,\dots,n\) where
\(X,g,Y\) are the coordinates on \(\calXr_2(G_n)=\frb_n\times G_n\times\frn_n\).
It is explained \cite[section 6.3]{OblomkovRozansky16} the push-forward \(i_{1*}(\calF)\)
of \(\calF\in \MF_{B^2}\bigl(\calXr_2(P_1)\bigr)\) is the matrix factorization:
\[\tilde{\calF}\otimes \mathrm{K}^{\delta_1\Wr}(g_{i1}),\]
where \(\tilde{\calF}\) is a canonical extension of \(\calF\) to \(\calXr_2(G_n)\):
\(\tilde{\calF}|_{\calXr_2(P_1)}=\calF\)
(see details in \cite[section 6.3]{OblomkovRozansky16}) and \(\delta_1\Wr=\Wr-\Wr|_{g_{12}=\dots=g_{1n}=0}\).

If we set \(\calF=p_1^*(\mathds{1}_1\boxtimes\calG)\), \(G\in \MF_{B^2}\bigl(\calXr_2(G_{n-1})\bigr)\) in last formula we get
\[\tilde{\imath}_*(\overline{\ind}(\calG))=j_e^*(i_{1*}(\calF))=j_e^*(\tilde{\calF})\otimes j_e^*(\mathrm{K}^{\delta_1\Wr}(g_{i1})).\]
Now let us observe that the first factor in the last formula is \(j_e^*(\mathds{1}_1\boxtimes\calG)=
\pi^*(j_e^*(\calG))\). On the other hand \(j_e^*(\delta_1\Wr)=0\) and \(j_e(\mathrm{K}^{\delta_1\Wr}(g_{i1}))=
\mathrm{K}\left(j_e^*(\frac{\partial\Wr}{\partial g_{i1}})\right)\). Since
\(j^*_e(\frac{\partial\Wr}{\partial g_{i1}})(X,Y)=[X,Y]_{1i}\) the formula (\ref{eq:pi_bul}) follows.

For a Young tableau \(\yT\in\sytn\) let \(\yT'\in \sytno\) be the tableau constructed by removing the box labelled \(n\) from \(\yT\).
Denote by $\lmp$ the Young diagram corresponding to the tableau $\yT'$.
The formula \ref{eq:JMproj} shows that
\[Q_{\yT}=(\mathfrak{ind}(Q_{\yT'}))\cdot \prod_{\square\in \Cout(\lmp)\setminus \yT}(\delta_1-q^{2c(\square )}). \]

Let us denote the factor in the last formula by \(P(\delta_1)\) then we have the following equalities in the
K-theories:
\begin{multline*}
  \tilde{\imath}_*(\phi_n(Q_{\yT}))=P(\mathcal{L}_1)\tilde{\imath}_*(\phi_n(\mathfrak{ind}(Q_{\yT'})))=
  P(\mathcal{L}_1)\tilde{\imath}_*(\overline{\ind}_1(\mathds{1}_1\otimes\phi_{n-1}(Q_{T'})))=\\
  P(\mathcal{L}_1)\pi^\bullet(\tilde{\imath}_*(\phi_{n-1}(Q_{\yT'})))=C_{\tau'}P(\mathcal{L}_1)
  \pi^\bullet(\tilde{\imath}_*\circ\tilde{\imath}^*([\mathcal{O}_{\mathbb{A}_{\yT'}}]))=
  C_{\yT'}\tilde{\imath}_*\circ\tilde{\imath}^*(P(\mathcal{L}_1)\pi^*([\mathcal{O}_{\mathbb{A}_{T'}}]))=\\
  C_{\yT}\tilde{\imath}_*\circ\tilde{\imath}^*([\mathcal{O}_{\mathbb{A}_{\yT}}]).
\end{multline*}

In the last equality is because \(\pi^{-1}(\mathbb{A}_{T'})=\bigcup_{\yS} \mathbb{A}_{\yS}\) where \(\yS\in \sytn\) such that \(\yS\setminus \yT'\in \Cout(\yT')\).
If \(\yS\setminus \yT'=\square\) then \((\mathcal{L}_1-q^{2c(\square)})\mathbb{A}_\yS=0\). Hence
\[P(\mathcal{L}_1)\pi^*(\mathbb{A}_{\yT'})=\prod_{\square\in \Cout(\yT')\setminus \yT}(q^{2c(\square')}-q^{2c(\square)})\cdot\mathbb{A}_{\yT},\]
where \(\square'=\yT\setminus \yT'\).

\subsection{Proof of Theorem~\ref{thm:Ktheo}}
\label{sec:inj}
The elements \(\tilde{\imath}_*\circ\tilde{\imath}^*(\mathbb{A}_{\yT})\),
\(\yT\in \sytn\) are linearly independent in the group \(\KCHn\). Indeed, the affine subvarieties
\(U_{\yT}=\FHilbn\cap \mathbb{A}_\yT\) form \(\sCsq\)-equivariant \v{C}ech cover of
\(\FHilbn\) and variety \(U_\yT\) has only one \(\sCsq\)-fixed point, hence the skyscraper at the torus-fixed point
\(\delta_\yT\in\KCHn \) are non-trivial. Hence a non-trivial relation
between the elements \(\tilde{\imath}_*\circ\tilde{\imath}^*(\mathbb{A}_\yT)\) contradicts
the non-triviality of \(\KCHn \) after taking product with elements \(\delta_\yT\).

Thus the homomorphism \(\tilde{\imath}_*\circ\phi: \yJMn\to\KCHn\) is injective, hence
the restriction of \(\phi\) on \(\yJMn\) is also injective. On the other hand, if the homomorphism
\(\phi\) has a kernel, the kernel is a non-trivial subsum of LHS of (\ref{eq:dec}). That would contradict
the injectivity of the homomorphism of \(\phi|_{\yJMn}\) since \(\yJMn\) is generated by the diagonal entries of
matrix algebras of LHS (\ref{eq:dec}).

\section{DG schemes vs Matrix Factorizations}
\label{sec:dg-schem}

\subsection{DG schemes and weak DG schemes}
\label{sec:instantons}

In this paper we take the perspective of \cite{GorskyNegutRasmussen16} and ignore all topological issues of the theory of dg schemes. The appendix of \cite{GorskyNegutRasmussen16} was particularly
useful for writing our paper. The dg structure on \(\FHilbn(\CC^2)\) differs slightly from the structure from \cite{GorskyNegutRasmussen16} because of the way
we deal with group-equivariant structure. We define the dg structure on \(\tFMrn\)
as the one induced by
\[\mathcal{O}_{\tFMrn}=S_{\calXr_{2,r}}[\mathfrak{n}^{\vee}\rightarrow \mathcal{O}]\]
where \(\mathfrak{n}^\vee\) is the trivial bundle over \(\calXr_{2,r}\) with fiber \(\mathfrak{n}^\vee\) and the differential is matrix the strictly upper-triangular part  of   \([X,Y]\). The term \(\mathfrak{n}^\vee\) is in the homological degree \(-1\) hence yields the
algebra \(\Lambda^*(\mathfrak{n}^\vee)\).


Similarly, an object  of \(\dgD(\tFMrn)\)  over the quasi-affine dg scheme \(\tFMrn\) is a
pair $\bigl( (\mathcal{P}^\bullet,d_{\mathcal{P}}), \lambda \bigr)$, where $\mathcal{P}^\bullet$ is a two-periodic complex of quasi-coherent sheaves, $d_{\mathcal{P}}$ being its differential, while $\lambda$ is a map
\[\lambda:\mathfrak{n}^\vee\otimes \mathcal{P}^\bullet\rightarrow\mathcal{P}^{\bullet-1},\]
such that \(\lambda^2=0\) in the sense of the commutativity of the following diagram:
\begin{equation}\label{eq:anticomm}\begin{tikzcd}
  &\mathfrak{n}^\vee\otimes\mathcal{P}^{\bullet-1}\arrow[rd,"\lambda"]&\\
  \mathfrak{n}^\vee\otimes\mathfrak{n}^\vee\otimes\mathcal{P}^{\bullet-2}\arrow[ru,"\lambda^{(1)}"]\arrow[rd,"\lambda^{(2)}"]& &\mathcal{P}^\bullet\\
  &\mathfrak{n}^\vee\otimes\mathcal{P}^{\bullet-1}\arrow[ru,"-\lambda"]&
  \end{tikzcd}
\end{equation}
We also require the homomorphisms \(\lambda_{ij}\) to be compatible with the differential \(d_\mathcal{P}\):
\[[d_\mathcal{P},\lambda_{ij}]=[X,Y]_{ij}.\]

Thus the homomorphisms \(\lambda_{ij}\) are homotopies that connect \([X,Y]_{ij}\) with zero and the condition (\ref{eq:anticomm}) tells us that these homotopies anti-commute.
Let us weaken the condition of commutativity of (\ref{eq:anticomm}) and obtain a bigger category \(D'(\tFMrn)\) where the objects are triples
\[((\mathcal{P},d_\mathcal{P}),\lambda,\chi),\quad \chi:\mathfrak{n}^\vee\otimes\mathfrak{n}^\vee\otimes \mathcal{P}^{\bullet-2}\rightarrow\mathcal{P}^{\bullet},\]
\(\chi\) being a homotopy between the composition of the top and bottom arrows in the diamond
(\ref{eq:anticomm}). It is reasonable to call this category
a derived category of coherent sheaves on the {\it weak dg scheme.}

The central category in the main body of this paper as well as in
\cite{OblomkovRozansky16},\cite{OblomkovRozansky17} is the subcategory  of
the category
\(\yMFBlzv{\tFMfrrn}\)
generated by the objects that are obtained
by the pull-back functor \(j_e^*\) from elements of \(\MF_{B^2}(\calXr_2,\Wr)\).
Since the elements of this subcategory are supported on \(\tFMrn\)
there is a natural functor from this subcategory to the \(B\)-equivariant version
\(D'_B(\FMrn)\) of \(D'(\tFMrn)\). On the other hand there no obvious functor to a
\(B\)-equivariant version of \(\dgD(\tFMrn)\)


It appears though there is a way to relate our categories of matrix factorizations to the category of sheaves on the dg-scheme \(\FHilbn\)
and we discuss our proposal in the next subsection. To be more precise we discuss
the relation with sheaves on the dg-scheme of commuting matrices, we choose
to ignore the stability condition to simplify our notations.

\subsection{Matrix factorizations}
\label{sec:matr-fact}

Let \(B_-\subset G\) be a subgroup of lower-triangular matrices, we denote by \(G^\circ\subset G\) the open subset \(B_-\cdot B\) and respectively \(\calXr^\circ_{r,n}=\calXr_{r,n}\cap (\frb\times G^\circ\times\frn\times\xcVr)\). The open embedding
\(u:\calXr^0_{r,n}\rightarrow\calXr_{r,n}\) induces:
\[u^*\colon\MF_{n,r}\rightarrow\MF_{n,r,\circ}:=\MF_{n}(\calXr^\circ_{r,n}).\]

We expect the category \(\MF_{n,r,\circ}\) to be closely related to the category \(D^b(FM_{n,r})\). Our optimism stems out of the Koszul duality in the following setting.
Let us introduce the linear version of our potential:
\[\xWl(X,Z,Y)=\Tr(X[Y,Z]).\]
The linear Koszul duality then implies an isomorphism of categories:
\[\MFs(\frb\times \frn_-\times\frn,\xWl)\simeq D^{b}(\mathcal{C}),
\]
where \(\mathcal{C}\) is a commuting variety. On the other hand we can reduce the equivariant structure  in our original matrix factorization:
\[\MF_{B^2}(\calXr^{\circ}_{r,n},\Wr)=\MF_{B}(\calXr^{\bullet}_{r,n},\Wr),\]
where \(\calXr^{\bullet}_{r,n}=\calXr_{r,n}\cap (\frb\times U_-\times\frn\times \xcVr)\).

Ignoring the equivariant structure  we could provisionally expect some close relation between the two categories:
\[\MF(\frb\times\frn_-\times\frn,\xWl)\stackrel{?}{=}\MF(\frb\times U_-\times\frn,\Wr).\]
At the moment we do not have a precise version of the equality above, we have a first order approximation to the equality that we discuss below.

First we identify \(U_-\) and \(\frn_-\) by the exponential map. Under this identification we see that
\[\Wr^{lin}(X,Z,Y)=\Wr(X,Z,Y)\mbox{ mod } \mathfrak{m}(Z)^2,  \]
where \(\mathfrak{m}(Z)\) stands for the ideal generated by \(Z_{ij}\). It also clear that these potentials differ modulo \(\mathfrak{m}(Z)^3\). Nevertheless the corresponding categories
are equivalent in the following sense.

Let us introduce the notation \(\calXr^{\sim k}_{n,r}\) for the \(k\)-th infinitesimal neighborhood of the \(Z=0\) locus inside \(\calXr^\bullet_{r,n}\). As we discussed before the restrictions
of \(\xWl\) and \(\Wr\) on \(\calXr_{r,n}^{\sim 1}\) coincide, a strengthening of this observation is the following:
\begin{lemma}
  There is a natural isomorphism:
  \[\MF(\calXr_{r,n}^{\sim 2},\xWl)=\MF(\calXr_{r,n}^{\sim 2},\Wr).\]
\end{lemma}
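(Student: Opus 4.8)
The plan is to exhibit the two potentials as the two ends of a short deformation and to kill the difference by an honest coordinate change that exists precisely because we truncate at second order in $Z$. After identifying $U_-$ with $\frn_-$ through the exponential map, normalized so that $g^{-1}=\exp(Z)$ with $Z\in\frn_-$, I would rewrite the original potential as $\Wr(X,g,Y)=\Tr(X\,\Ad_g Y)=\Tr\bigl(\exp(\ad_Z)X\cdot Y\bigr)$ and expand. Since $X\in\frb$ and $Y\in\frn$, the order-zero term $\Tr(XY)$ vanishes, the order-one term reproduces $\xWl=\Tr(X[Y,Z])=\Tr([Z,X]Y)$, and on the second infinitesimal neighborhood $\calXr_{r,n}^{\sim2}$ (where $\mathfrak{m}(Z)^{3}=0$) all terms of order $\ge 3$ in $Z$ die. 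Using the invariance $\Tr([Z,[Z,X]]Y)=\Tr([Z,X][Y,Z])$ of the trace form, this leaves exactly
\[\Wr=\xWl+Q,\qquad Q=\tfrac12\,\Tr\bigl([Z,X][Y,Z]\bigr)\]
on $\calXr_{r,n}^{\sim2}$, so the two potentials differ only by the single quadratic-in-$Z$ term $Q$.

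The central observation I would then establish is that $Q$ lies in the Jacobian ideal $J(\xWl)$, with coefficients that strictly raise the $Z$-degree. Writing $P=[Z,X]$ and $R=[Y,Z]$, the partial derivatives of $\xWl$ are: the strictly lower entries of $P$ are the $Y$-derivatives $\partial\xWl/\partial Y_{ij}$ ($i<j$); the lower-triangular including diagonal entries of $R$ are the $X$-derivatives $\partial\xWl/\partial X_{ij}$ ($i\le j$). Splitting $Q=\tfrac12\sum_{i,j}P_{ij}R_{ji}$ according to $i>j$, $i<j$, $i=j$, in each case exactly one of the two factors is a generator of $J(\xWl)$ (namely $P_{ij}$ for $i>j$, and $R_{ji}$ for $i\ge j$), while the other factor is the coefficient. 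Hence no ``wrong-triangular'' pieces survive and $Q=\sum_\alpha h_\alpha\,\partial_\alpha\xWl$, where each $\partial_\alpha$ is an $X$- or $Y$-coordinate direction and each coefficient $h_\alpha$ (an entry of $P$ or $R$) is homogeneous of $Z$-degree one.

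Granting this decomposition I would define the vector field $v=\sum_\alpha h_\alpha\,\partial_\alpha$. Because every $h_\alpha\in\mathfrak{m}(Z)$ has $Z$-degree one and every $\partial_\alpha$ preserves the $Z$-grading, $v$ raises the $Z$-degree by one; on $\calXr_{r,n}^{\sim2}$ this forces $v^{3}=0$, so $\Phi:=\exp(v)=1+v+\tfrac12 v^{2}$ is a genuine polynomial automorphism of the scheme $\calXr_{r,n}^{\sim2}$ that preserves the ambient product $\frb\times\frn_-\times\frn\times\xcVr$ (it only shifts $X,Y$ by expressions vanishing at $Z=0$, so the open stability locus and the torus/$B$ weights are respected). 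Since $v\xWl=Q$ and $v^{2}\xWl=vQ$ has $Z$-degree three and therefore vanishes, we get $\Phi^{*}\xWl=\xWl+Q=\Wr$ exactly. Pulling back the data $(M,D)$ along $\Phi$ turns a matrix factorization of $\xWl$ into one of $\Phi^{*}\xWl=\Wr$, giving the desired natural isomorphism $\Phi^{*}\colon\MFs(\calXr_{r,n}^{\sim2},\xWl)\isom\MFs(\calXr_{r,n}^{\sim2},\Wr)$ with inverse $(\Phi^{-1})^{*}$.

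The computation is essentially obstruction-free, and the step deserving the most care is the grading bookkeeping that makes $\Phi$ terminate: the fact that every coefficient $h_\alpha$ has $Z$-degree exactly one (not zero) is what guarantees nilpotency of $v$ and the vanishing of the higher flow-corrections, and it is also exactly what breaks down at the third neighborhood, where the cubic discrepancy between $\Wr$ and $\xWl$ need no longer lie in $J(\xWl)$ with degree-raising coefficients. This is why the statement is sharp at the second order and explains the ``first order approximation'' language preceding it.
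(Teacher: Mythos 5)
Your proof is correct, and while it rests on the same underlying mechanism as the paper's argument --- the quadratic discrepancy between $\Wr$ and $\xWl$ lies in the Jacobian ideal of the potential with coefficients that strictly raise the $Z$-degree, so it can be absorbed by a substitution that terminates on the second infinitesimal neighborhood --- your implementation is genuinely different and in some respects cleaner. The paper proceeds in two explicit steps: first the change of variables $(X,Z,Y)\mapsto\bigl(X,Z,Y-\tfrac12[Y,Z]_{++}\bigr)$, which leaves a residual quadratic term $\Tr\bigl([X,Z]_+[Y,Z]_-\bigr)$ proportional to $\partial \Wr/\partial X$, and then a hand-made modification of the differential,
\[
D''=D'+\sum_{ij}[X,Z]_{ij}\,\frac{\partial D'}{\partial X_{ij}}+\cdots,
\]
whose square is checked modulo $\mathfrak{m}(Z)^3$ by direct computation, with the correspondence on morphisms only asserted (``could be reconstructed from the above considerations''). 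You instead keep the full discrepancy $Q=\tfrac12\Tr\bigl([Z,X][Y,Z]\bigr)$ in one piece, decompose it over the generators of the Jacobian ideal of $\xWl$ --- and your triangularity bookkeeping with $P=[Z,X]$, $R=[Y,Z]$ is right: in each monomial $P_{ij}R_{ji}$ exactly one factor is a generator ($P_{ij}$ for $i>j$, $R_{ji}$ for $i\le j$), and the complementary factor, of $Z$-degree one, serves as coefficient --- and then exponentiate the resulting degree-raising vector field $v$ to an honest automorphism $\Phi=1+v+\tfrac12 v^2$ of $\calXr_{r,n}^{\sim 2}$ with $\Phi^*\xWl=\Wr$. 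This buys an immediate equivalence of matrix factorization categories: objects, morphisms, naturality, and the inverse all come for free from $\Phi^*$ and $(\Phi^{-1})^*$, whereas the paper's differential-level Taylor expansion is precisely the infinitesimal shadow of your $\Phi^*$ (note that at first order your flow shifts $Y$ by $\tfrac12[Y,Z]_{++}$ and $X$ by the upper-triangular part of $\tfrac12[Z,X]$, recovering both of the paper's steps at once). Your closing observation also correctly locates why the argument is sharp at the second neighborhood --- at third order the cubic discrepancy need no longer admit such a degree-raising Jacobian decomposition --- which matches the paper's decision to leave the higher neighborhoods as a conjecture about a functor rather than an isomorphism.
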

\begin{proof}
  First we do the following  change of variables:
  \[(X,Z,Y)\mapsto (X,Z,Y-\frac12[Y,Z]_{++}).\]

  Under this change of variables the potential \(\Wr\) turns into the potential:  \begin{multline*}\Wr'(X,Z,Y)=\Tr(X\ad_Z(Y))+\frac{1}{2}\Tr(X\ad_Z([Y,Z]_-))-\frac{1}{4}\Tr(X\ad_Z^2([Y,Z]_{++}))\\+\sum_{k\ge 3}\frac{1}{k!}\Tr(X\ad_Z^3(Y-\frac12[Y,Z]_{++})).\end{multline*}

  The last two terms are cubic \(Z\), on the other hand the second is quadratic and it could be rewritten in the form
  \(\Tr([X,Z]_+[Y,Z]_-)\). Since the second term inside \(\Tr\) in the last expression is proportional to \(\frac{\partial \Wr}{\partial X}\) hence if \((D',M)\in \MF(\bullet,\Wr')\) then
  the modified differential:
  \[D''=D'+\sum_{ij}[X,Z]_{ij}\frac{\partial D'}{\partial X_{ij}}+\sum_{ij,kl}\frac{\partial^2 D'}{\partial X_{ij}\partial X_{kl}}[X,Z]_{ij}[X,Z]_{kl},\]
  module \(\mathfrak{m}(Z)^3\) squares to
  \begin{multline*}
    (D')^2+\sum_{ij}[X,Z]_{ij}[D',\frac{\partial D'}{\partial X_{ij}}]+\sum_{ij,kl}\left(\frac{\partial D'}{\partial X_{ij}}\frac{\partial D'}{\partial X_{kl}}+
      \left[\frac{\partial^2 D'}{\partial X_{ij}\partial X_{kl}},D'\right]\right)[X,Z]_{ij}[X,Z]_{kl}=\\
    \Wr'+\sum_{ij}[X,Z]_{ij}\frac{\partial \Wr'}{\partial X_{ij}}+\sum_{ij,kl}\frac{\partial^2 \Wr'}{\partial X_{ij}\partial X_{kl}}[X,Z]_{ij}[X,Z]_{kl}=\Wr^{lin}.
  \end{multline*}
The last equality follows from the fact that \(\Wr'\) is linear of \(X\) and that \(\frac{\partial\Wr'}{\partial X}=[Y,Z]\mbox{ mod }\mathfrak{m}(Z)^2\)

The maps between the spaces of morphisms could be reconstructed from the above considerations.
\end{proof}

We expect there is an extension of our argument to the higher orders:
\begin{conj} For every \(k>0\) there is a natural functor
  \[\MF(\calXr_{r,n}^{\sim 2},\xWl)=\MF(\calXr_{r,n}^{\sim 2},\Wr).\]
\end{conj}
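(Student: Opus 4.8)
The plan is to establish the conjecture by induction on the order $k$ of the infinitesimal neighbourhood, taking the isomorphism $\MF(\calXr_{r,n}^{\sim 2},\xWl)=\MF(\calXr_{r,n}^{\sim 2},\Wr)$ of the preceding lemma as the base case and propagating it outward one infinitesimal order in $Z$ at a time. The organizing input is the identity $\Ad_{e^Z}=e^{\ad_Z}=\sum_{m\ge 0}\frac{1}{m!}\ad_Z^m$, under which the full potential reads $\Wr=\Tr\!\bigl(X\,\Ad_{e^Z}Y\bigr)$ and its linearization reads $\xWl=\Tr(X\,\ad_Z Y)$, so that the discrepancy is the purely higher-order-in-$Z$ expression $\Wr-\xWl=\sum_{m\ge 2}\frac{1}{m!}\Tr(X\,\ad_Z^m Y)$. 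Each term of this difference is linear in $X$, which is precisely the structural feature exploited in the lemma: because the potential is linear in $X$, a substitution $X\mapsto X+\Xi$ changes it by the single term $\sum_{ij}\Xi_{ij}\,\partial\Wr/\partial X_{ij}$ with no tail, and so the discrepancy can be matched against the Jacobian ideal generated by the $X$-derivatives $\partial\Wr/\partial X_{ij}$, whose leading term is $\Ad_{e^Z}Y=Y+\ad_Z Y+\cdots$.

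Concretely, at the inductive step I assume the two potentials agree modulo $\mathfrak{m}(Z)^{k}$, so that the residual discrepancy is homogeneous of degree $k$ in $Z$. First I would perform a change of variables $Y\mapsto Y+P_k$, with $P_k$ homogeneous of degree $k$ in $Z$, chosen to remove the part of the residual that is \emph{not} already proportional to $\partial\Wr/\partial X$; this is the direct analogue of the substitution $Y\mapsto Y-\tfrac12[Y,Z]_{++}$ used to pass from $\Wr$ to $\Wr'$ in the lemma. What survives is, by design, a degree-$k$ element of the Jacobian ideal, of the form $\sum_{ij}R^{(k)}_{ij}\,\partial\Wr/\partial X_{ij}$ for suitable coefficients $R^{(k)}_{ij}$. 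This residual is then absorbed exactly as in the base case, by modifying the differential $D$ of a matrix factorization through correction terms built from $R^{(k)}_{ij}$ and the higher $X$-derivatives of $D$; the resulting differential is the truncation of a formal substitution $D(X+\Xi,\dots)$ and squares to the lower-order potential modulo $\mathfrak{m}(Z)^{k+1}$. Iterating through all $k$ up to the order of the neighbourhood yields the equivalence, and the induced maps on morphism spaces are reconstructed from the same correction data, as indicated at the end of the lemma's proof.

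The main obstacle I anticipate lies in controlling, at high order, the noncommutative bookkeeping of the correction terms so that the modified differential squares \emph{exactly} to $\xWl$ modulo the appropriate power of $\mathfrak{m}(Z)$. In the base case only first- and second-order $X$-derivatives of $D$ occur and their cross-terms close up after a short computation; at order $k$ one must carry corrections up to the $k$-th $X$-derivative, and the cancellation of the mixed terms produced by corrections of different orders is no longer automatic. The crux is to show that the accumulated correction is genuinely the truncation of a single substitution $X\mapsto X+\Xi(X,Z)$, with $\Xi$ chosen compatibly with the successive reparametrizations $Y\mapsto Y+P_k$. A secondary but necessary point is to run the entire construction $B$- (respectively $B^2$-) equivariantly and to check that the two functors so produced are mutually inverse, so that the statement is a genuine equivalence and is natural in the data. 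I expect the equivariance to be harmless, since every substitution is assembled from $\ad_Z$, $[X,Z]$ and $[Y,Z]$, which are manifestly equivariant; the real difficulty should be the purely algebraic closure of this higher-order correction scheme.
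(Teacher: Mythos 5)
The statement you are asked to prove is left \emph{open} in the paper: the authors prove only the second-order case (the lemma immediately preceding, giving $\MF(\calXr_{r,n}^{\sim 2},\xWl)=\MF(\calXr_{r,n}^{\sim 2},\Wr)$) and explicitly say they only \emph{expect} an extension of their argument to higher orders --- indeed the conjecture's display still reads $\sim 2$ where $\sim k$ is surely intended, as you correctly guessed. So there is no proof in the paper to compare against, and your proposal must stand on its own. As written it does not: it is an accurate identification of the expected inductive strategy, but at the two places where the conjecture is genuinely open it restates the difficulty rather than resolving it.

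Concretely, two gaps. First, your inductive step posits that the order-$k$ residual of $\Wr-\xWl$ decomposes as (a part removable by a substitution $Y\mapsto Y+P_k$ with $P_k$ landing in $\frn$) plus (an element $\sum_{ij}R^{(k)}_{ij}\,\partial\Wr/\partial X_{ij}$ of the Jacobian ideal with coefficients compatible with $X\in\frb$). At quadratic order this exists by the explicit splitting $[Y,Z]=[Y,Z]_{++}+[Y,Z]_{-}$, with the lower part pairing against $[X,Z]_{+}$ as in the lemma; at order $k$ the term $\Tr(X\,\ad_Z^m Y)$ spreads $\ad_Z^m Y$ over all of $\frg$, and you give no argument that its strictly-upper and lower-plus-diagonal components can always be separated consistently with the constraints that $P_k$ preserve $\frn$ and the $X$-substitution preserve $\frb$; you assert the decomposition ``by design'' but never construct it. Second --- and you say so yourself --- the closure of the correction scheme for the differential is unproven. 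The cleanest way to discharge it would be to realize the accumulated corrections as pullback along an honest automorphism $\phi_k$ of the $k$-th infinitesimal neighborhood (fixing $Z$, linear in the fibres, preserving $\frb\times\frn$) with $\Wr\circ\phi_k\equiv\xWl \bmod \mathfrak{m}(Z)^{k+1}$: since $\Wr$ is linear in $X$, a substitution $X\mapsto X+\Xi$ shifts the potential by exactly $\sum_{ij}\Xi_{ij}\,\partial\Wr/\partial X_{ij}$ with no tail, and $D\mapsto\phi_k^*D$ then squares to the correct potential automatically, eliminating the ``noncommutative bookkeeping'' worry. But the existence of such $\phi_k$ is essentially a restatement of the conjecture --- note that $\ad_Z$ is not invertible, so one cannot simply set $Y'=\ad_Z^{-1}(e^{\ad_Z}-1)Y$, and the naive candidate $\sum_{m\ge 0}\frac{1}{(m+1)!}\ad_Z^m Y$ fails to stay in $\frn$ --- and your proposal, like the paper, leaves this open. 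What you have is a correct reading of the base case and a correct diagnosis of the obstruction, not a proof.
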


\bibliographystyle{unsrt}
\bibliography{expanded}
\end{document}